\documentclass[10pt,draft]{article}
\usepackage{amsmath,amscd}
\usepackage{amssymb,latexsym,amsthm}
\usepackage{color}
\usepackage[spanish,english]{babel}
\usepackage{amssymb}
\usepackage{color}
\usepackage[latin1]{inputenc}
\usepackage{lscape}
\usepackage{fancyhdr}
\usepackage{pb-diagram}
\usepackage{amsfonts}

\numberwithin{equation}{section}
\newtheorem{theorem}{Theorem}[section]

\newtheorem{definition}[theorem]{Definition}
\newtheorem{proposition}[theorem]{Proposition}
\newtheorem{lemma}[theorem]{Lemma}

\newtheorem{corollary}[theorem]{Corollary}

\theoremstyle{definition}
\newtheorem{example}[theorem]{Example}

\newtheorem{remark}[theorem]{Remark}

\hoffset-0.3in
\voffset-1.3cm \setlength{\oddsidemargin}{9mm}
\setlength{\textheight}{21.3cm}\setlength{\textwidth}{16cm}
\makeatletter
\def\@roman#1{\romannumeral #1}
\makeatother

\title{\textbf{Matrix computations on projective modules \\ using noncommutative Gr\"obner bases}}
\author{Claudia Gallego\\
\texttt{cmgallegoj@unal.edu.co}
\\ Seminario de Álgebra Constructiva - SAC$^2$\\ Departamento de Matemáticas\\ Universidad Nacional de
Colombia, Sede Bogot\'a}
\date{}
\begin{document}
\maketitle
\begin{abstract}
\noindent Constructive proofs of fact that a stably free left $S$-module $M$ with ${\rm
rank}(M)\geq \text{\rm sr}(S)$ is free, where $\text{\rm sr}(S)$ denotes the stable rank of an
arbitrary ring $S$, were developed in \cite{Gallego3} (see also \cite{Gallego6} and
\cite{Quadrat}). Additionally, in such papers, are presented algorithmic proofs for calculating
projective dimension, and to check whether a left $S$-module $M$ is stably free. Given a left
$A$-module $M$, with $A$ a bijective skew $PBW$ extension, we will use these results and Gröbner
bases theory, to establish algorithms that allow us  to calculate effectively the projective
dimension for this module, to check whether is stably free, to construct minimal presentations, and
to obtain bases for free modules.
\bigskip

\noindent
\textit{Key words and phrases.} Noncommutative Gröbner bases; skew $PBW$ extensions;
 stably free modules; free modules; computation of bases; constructive algorithms.
\bigskip

\noindent 2010 \textit{Mathematics Subject Classification.} Primary: 16Z05. Secondary: 16D40,
16E05.
\end{abstract}
\section{Introduction}
\noindent The Gröbner bases theory provides us with a remarkable array of tools for the effective
calculation of diverse algebraic objects. We have developed this theory for skew $PBW$ extensions
(see \cite{Gallego6} and \cite{Gallego5}), which in turn has enabled us carry out calculations in a
broad class of noncommutative rings. In addition, given an arbitrary left $A$-module $M$, $A$ a
bijective skew $PBW$ extension, this Gröbner bases theory along with the existence of matrix
constructive results that allow to establish algorithms for calculating the projective dimension of
$M$, to check whether or not $M$ is stably free, and to obtain effectively a basis when $M$  is a
stably free module with ${\rm rank}(M)\geq \text{\rm sr}(S)$, will enable us to present throughout
current article, effective algorithms and computations of this kind, for modules defined on skew
$PBW$ extensions. The key tool for these algorithms will be the calculation of left and right
inverse matrices.

Related to the computation of projective dimension, we have that the following notable general
facts over arbitrary rings. $S$ will represent an arbitrary noncommutative ring.
\begin{theorem}\label{6.3.3}
Let $M$ be an $S$-module and
\begin{equation}\label{finiteprojectiveresolution}
0\to P_m\xrightarrow{f_{m}} P_{m-1}\xrightarrow{f_{m-1}}
P_{m-2}\xrightarrow{f_{m-2}} \cdots \xrightarrow{f_2}
P_1\xrightarrow{f_1} P_0\xrightarrow{f_0} M\longrightarrow 0
\end{equation}
a projective resolution of $M$. If $m\geq 2$ and there exists a
homomorphism $g_m:P_{m-1}\to P_m$ such that $g_mf_m=i_{P_m}$, then
we have the following projective resolution of $M$:
\begin{equation}
0\to P_{m-1}\xrightarrow{h_{m-1}} P_{m-2}\oplus P_m
\xrightarrow{h_{m-2}} P_{m-3}\xrightarrow{f_{m-3}} \cdots
\xrightarrow{f_2} P_1\xrightarrow{f_1} P_0\xrightarrow{f_0}
M\longrightarrow 0
\end{equation}
with
\begin{center}
$h_{m-1}:=\begin{bmatrix}f_{m-1}\\g_m\end{bmatrix}$, \ \
$h_{m-2}:=\begin{bmatrix}f_{m-2}& 0\end{bmatrix}$.
\end{center}
\end{theorem}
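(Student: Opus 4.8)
The plan is to verify by hand that the new resolution is again a projective resolution of $M$, checking the complex property and then exactness node by node, and invoking the hypothesis $g_mf_m = i_{P_m}$ only at the two places where the differentials have actually changed. First I would note that $P_{m-1}$ is projective and $P_{m-2}\oplus P_m$ is projective (a finite direct sum of projectives is projective), so the new sequence is at least a sequence of projective modules over $M$. That it is a complex is immediate: $h_{m-2}h_{m-1}(x) = f_{m-2}f_{m-1}(x) = 0$ for $x\in P_{m-1}$, and $f_{m-3}h_{m-2}(y,z) = f_{m-3}f_{m-2}(y) = 0$ for $(y,z)\in P_{m-2}\oplus P_m$; all the remaining compositions coincide with those of \eqref{finiteprojectiveresolution}.

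Next, for every node of the new sequence lying to the right of $P_{m-2}\oplus P_m$ — namely $P_{m-3},\dots,P_0$ and $M$ — exactness is inherited from \eqref{finiteprojectiveresolution}: the modules there are unchanged, and the only arrow that differs is the one entering $P_{m-3}$ (entering $M$ when $m = 2$), namely $h_{m-2}$, for which $\operatorname{im}(h_{m-2}) = f_{m-2}(P_{m-2}\oplus P_m) = \operatorname{im}(f_{m-2})$; hence the kernel-equals-image identity at that node is exactly the one already known from \eqref{finiteprojectiveresolution}.

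The heart of the argument is exactness at $P_{m-2}\oplus P_m$. Here $\ker(h_{m-2}) = \ker(f_{m-2})\oplus P_m = \operatorname{im}(f_{m-1})\oplus P_m$ by exactness of \eqref{finiteprojectiveresolution} at $P_{m-2}$, while the inclusion $\operatorname{im}(h_{m-1})\subseteq \ker(h_{m-2})$ is clear since $f_{m-1}(x)\in\operatorname{im}(f_{m-1})$. For the reverse inclusion — the one step I expect to be the genuine obstacle — given an element $(f_{m-1}(x_0),z)$ of $\operatorname{im}(f_{m-1})\oplus P_m$, I would write down the preimage explicitly as
\[
x := x_0 + f_m\bigl(z - g_m(x_0)\bigr)\in P_{m-1},
\]
and then compute $f_{m-1}(x) = f_{m-1}(x_0)$ using $f_{m-1}f_m = 0$, and $g_m(x) = g_m(x_0) + \bigl(z - g_m(x_0)\bigr) = z$ using $g_mf_m = i_{P_m}$; thus $h_{m-1}(x) = (f_{m-1}(x_0),z)$, as needed.

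Finally, exactness at the new left-most term $P_{m-1}$ is the injectivity of $h_{m-1}$: if $h_{m-1}(x) = 0$ then $f_{m-1}(x) = 0$, so by exactness of \eqref{finiteprojectiveresolution} at $P_{m-1}$ we have $x = f_m(t)$ for some $t\in P_m$, and then $0 = g_m(x) = g_mf_m(t) = t$, whence $x = f_m(0) = 0$. With every node checked, the new sequence is a projective resolution of $M$, finite because it is obtained from \eqref{finiteprojectiveresolution} by removing one module from the tail. The only computation that is not pure bookkeeping is the choice of preimage $x$ above, which is precisely where the splitting $g_mf_m = i_{P_m}$ is used.
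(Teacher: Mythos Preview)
Your proof is correct: the node-by-node verification is the standard way to establish this shortening lemma, and your explicit preimage $x = x_0 + f_m(z - g_m(x_0))$ at $P_{m-2}\oplus P_m$ is exactly the right computation. The paper does not give its own argument for this statement but simply cites \cite{Quadrat}, Proposition~20, so there is no in-paper proof to compare against; your direct verification is precisely the kind of proof one finds in that reference.
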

\begin{proof}
See \cite{Quadrat}, Proposition 20.
\end{proof}

\begin{corollary}\label{6.3.4}
Let $M$ be an $S$-module and
\begin{equation}\label{equ6.2.11}
0\to S^{s_m}\xrightarrow{f_{m}} S^{s_{m-1}}\xrightarrow{f_{m-1}}
S^{s_{m-2}}\xrightarrow{f_{m-2}} \cdots \xrightarrow{f_2}
S^{s_1}\xrightarrow{f_1} S^{s_0}\xrightarrow{f_0} M\longrightarrow
0
\end{equation}
a finite free resolution of $M$. Let $F_i$ be the matrix of $f_i$
in the canonical bases, $1\leq i\leq m$. Then,
\begin{enumerate}
\item[\rm (i)]If $m\geq 3$ and there exists a homomorphism $g_m:S^{s_{m-1}}\to
S^{s_m}$ such that $g_mf_m=i_{S^{s_m}}$, then we have the
following finite free resolution of $M$: {\footnotesize
\begin{equation}\label{equ6.3.5}
0\to S^{s_{m-1}}\xrightarrow{h_{m-1}} S^{s_{m-2}+s_m}
\xrightarrow{h_{m-2}} S^{s_{m-3}}\xrightarrow{f_{m-3}} \cdots
\xrightarrow{f_1} S^{s_0}\xrightarrow{f_0} M\longrightarrow 0
\end{equation}}
with
\begin{center}
$h_{m-1}:=\begin{bmatrix}f_{m-1}\\g_m\end{bmatrix}$, \ \
$h_{m-2}:=\begin{bmatrix}f_{m-2}& 0\end{bmatrix}$.
\end{center}
In a matrix notation, if $G_m$ is the matrix of $g_m$ and $H_j$ is
the matrix of $h_j$ in the canonical bases, $j=m-1,m-2$, then
\begin{center}
$H_{m-1}^T:=\begin{bmatrix}F_{m-1}^T& G_m^T\end{bmatrix}$, \ \
$H_{m-2}^T:=\begin{bmatrix}F_{m-2}^T\\0\end{bmatrix}$.
\end{center}
\item[\rm (ii)]If $m=2$ and there exists a homomorphism $g_2:S^{s_1}\to
S^{s_2}$ such that $g_2f_2=i_{S^{s_2}}$, then we have the
following finite presentation of $M$:
\begin{equation}\label{equ6.3.6}
0\to S^{s_1}\xrightarrow{h_1}S^{s_0+s_2}\xrightarrow{h_0}M\to 0,
\end{equation}
with
\begin{center}
$h_{1}:=\begin{bmatrix}f_{1}\\g_2\end{bmatrix}$, \ \
$h_{0}:=\begin{bmatrix}f_{0}& 0\end{bmatrix}$.
\end{center}
In a matrix notation,
\begin{center}
$H_{1}^T:=\begin{bmatrix}F_{1}^T& G_2^T\end{bmatrix}$, \ \
$H_{0}^T:=\begin{bmatrix}f_{0}\\0\end{bmatrix}$.
\end{center}
\end{enumerate}
\end{corollary}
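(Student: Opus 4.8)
The plan is to obtain both parts directly from Theorem \ref{6.3.3}; the content is entirely contained there, and only two routine verifications need to be added. First, Theorem \ref{6.3.3} produces a \emph{projective} resolution, so one must check that here every term is in fact free. The only term that is not one of the original $S^{s_i}$ is $P_{m-2}\oplus P_m=S^{s_{m-2}}\oplus S^{s_m}$, and I would identify it with $S^{s_{m-2}+s_m}$ by concatenating the two canonical bases; all remaining terms are free by hypothesis. Second, one must express the block homomorphisms $h_{m-1}=\begin{bmatrix}f_{m-1}\\g_m\end{bmatrix}$ and $h_{m-2}=\begin{bmatrix}f_{m-2}&0\end{bmatrix}$ through the canonical-bases matrices of $f_{m-1},f_{m-2},g_m$; doing so with the matrix convention used in the paper is exactly what produces the transposes in the asserted formulas for $H_{m-1},H_{m-2}$ (resp. $H_1,H_0$).

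\emph{Part (i).} Since $m\geq 3\geq 2$, Theorem \ref{6.3.3} applies to the resolution (\ref{equ6.2.11}) with the given retraction $g_m$ of $f_m$, and yields the projective resolution
\[
0\to S^{s_{m-1}}\xrightarrow{h_{m-1}} S^{s_{m-2}}\oplus S^{s_m}\xrightarrow{h_{m-2}} S^{s_{m-3}}\xrightarrow{f_{m-3}}\cdots\xrightarrow{f_1}S^{s_0}\xrightarrow{f_0}M\to 0.
\]
Here $m\geq 3$ guarantees that $S^{s_{m-3}}$ is a genuine free term (for $m=3$ it is $S^{s_0}$ and the chain $\cdots\xrightarrow{f_1}S^{s_0}$ is vacuous), so after the identification $S^{s_{m-2}}\oplus S^{s_m}\cong S^{s_{m-2}+s_m}$ this is precisely the finite free resolution (\ref{equ6.3.5}). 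Finally, under that identification $h_{m-1}\colon x\mapsto (f_{m-1}(x),g_m(x))$ and $h_{m-2}\colon(y,z)\mapsto f_{m-2}(y)$ are represented in the canonical bases by the block matrices built from $F_{m-1},G_m$ and from $F_{m-2}$ and a zero block, which in the paper's convention are recorded by $H_{m-1}^{T}=\begin{bmatrix}F_{m-1}^{T}&G_m^{T}\end{bmatrix}$ and $H_{m-2}^{T}=\begin{bmatrix}F_{m-2}^{T}\\0\end{bmatrix}$.

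\emph{Part (ii).} Now $m=2$, which still satisfies the hypothesis $m\geq 2$ of Theorem \ref{6.3.3}, and $g_2$ is a retraction of $f_2$. In this case the resolution produced by Theorem \ref{6.3.3} has length $1$: below $P_{m-2}\oplus P_m=S^{s_0}\oplus S^{s_2}$ there is no further free term, so it is the short exact sequence $0\to S^{s_1}\xrightarrow{h_1}S^{s_0}\oplus S^{s_2}\xrightarrow{h_0}M\to 0$. Identifying $S^{s_0}\oplus S^{s_2}$ with $S^{s_0+s_2}$ as above turns it into the finite presentation (\ref{equ6.3.6}), and the formulas for $H_1,H_0$ follow exactly as in Part (i).

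The routine pieces are the direct-sum identification and the two block-matrix transcriptions, so I do not expect a real obstacle. The one point requiring care is the boundary between the two cases: why (i) needs $m\geq 3$, namely so that $S^{s_{m-3}}$ is still a free term of the output resolution and not $M$, while the complementary value $m=2$ must be treated separately in (ii), where the construction of Theorem \ref{6.3.3} has collapsed from a resolution to a presentation.
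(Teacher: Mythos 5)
Your proposal is correct and follows essentially the same route as the paper: the corollary is obtained by specializing Theorem \ref{6.3.3} to the free resolution (\ref{equ6.2.11}), identifying $S^{s_{m-2}}\oplus S^{s_m}$ with $S^{s_{m-2}+s_m}$, separating the cases $m\geq 3$ and $m=2$, and transcribing $h_{m-1},h_{m-2}$ (resp.\ $h_1,h_0$) into block matrices under the paper's column convention, which is exactly what the cited source (\cite{Quadrat}, Corollary 21) does. Your attention to why $m\geq 3$ is needed in (i) and why $m=2$ collapses to a presentation in (ii) is the only non-mechanical point, and you handle it correctly.
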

\begin{proof}
See \cite{Quadrat}, Corollary 21.
\end{proof}

With respect to stably freeness, the following characterization holds.
\begin{theorem}\label{3.2.19a}
Let $M$ be an $S$-module with exact sequence $0\rightarrow
S^s\xrightarrow{f_1}S^r\xrightarrow{f_0}M\rightarrow 0$. Then,
$M^T\cong Ext_S^{1}(M,S)$ and the following conditions are
equivalent:
\begin{enumerate}
\item[\rm (i)]$M$ is stably free.
\item[\rm (ii)]$M$ is projective.
\item[\rm (iii)]$M^T=0$.
\item[\rm (iv)]$F_1^T$ has a right inverse.
\item[\rm (v)]$f_1$ has a left inverse.
\end{enumerate}
\end{theorem}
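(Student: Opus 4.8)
The statement bundles together one isomorphism ($M^T\cong \mathrm{Ext}^1_S(M,S)$) and the equivalence of five conditions. I would organize the proof around the contravariant functor $(-)^*:=\mathrm{Hom}_S(-,S)$, which sends a left $S$-module to a right $S$-module, and recall that the transpose $M^T$ is by definition the cokernel of the dual map $f_1^*:(S^r)^*\to(S^s)^*$, i.e. $M^T=\mathrm{coker}(f_1^*)$. Applying $(-)^*$ to the exact sequence $0\to S^s\xrightarrow{f_1}S^r\xrightarrow{f_0}M\to 0$ and using that $S^r$, $S^s$ are free (hence projective, so $\mathrm{Ext}^1_S(S^r,S)=0$), the long exact sequence in $\mathrm{Ext}$ collapses to $0\to M^*\to (S^r)^*\xrightarrow{f_1^*}(S^s)^*\to \mathrm{Ext}^1_S(M,S)\to 0$; reading off the cokernel of $f_1^*$ gives $M^T\cong\mathrm{Ext}^1_S(M,S)$ immediately.

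For the equivalences, I would run the cycle (i)$\Rightarrow$(ii)$\Rightarrow$(iii)$\Rightarrow$(iv)$\Leftrightarrow$(v)$\Rightarrow$(i). The implication (i)$\Rightarrow$(ii) is the trivial observation that stably free modules are projective. For (ii)$\Rightarrow$(iii): if $M$ is projective then it is a direct summand of a free module, so $\mathrm{Ext}^1_S(M,S)=0$, and by the isomorphism just established $M^T=0$. For (iii)$\Rightarrow$(iv): $M^T=\mathrm{coker}(f_1^*)=0$ says exactly that $f_1^*$ is surjective; since its codomain $(S^s)^*\cong S^s$ is free, the surjection splits, so $f_1^*$ has a section — and writing everything in the canonical bases, $f_1^*$ is given by $F_1^T$, so $F_1^T$ has a right inverse. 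The equivalence (iv)$\Leftrightarrow$(v) is the routine dictionary between a homomorphism and its transpose matrix: $f_1$ has a left inverse $g$ with $gf_1=i_{S^s}$ iff, dualizing, $f_1^*g^*=i$, i.e. $F_1^T$ has the right inverse $G^T$ (and conversely). Finally (v)$\Rightarrow$(i): a left inverse for $f_1$ means the sequence $0\to S^s\xrightarrow{f_1}S^r\to M\to 0$ splits, so $S^r\cong S^s\oplus M$, which is precisely the definition of $M$ being stably free.

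**Main obstacle.** None of the steps is deep; the one place to be careful is the bookkeeping for the duals of free modules and the matrix transposes, i.e. making sure that under the chosen identification $(S^n)^*\cong S^n$ the map induced by $f_1$ (whose matrix is $F_1$ acting on column vectors, say on the left) really corresponds to the matrix $F_1^T$ on the right-module side, and that "right inverse of $F_1^T$" matches "left inverse of $f_1$" with the composition on the correct side. This is the usual left/right-module subtlety for noncommutative $S$, and it is the only point where an otherwise formal argument could go wrong; once the conventions are fixed consistently with those used elsewhere in the paper, the rest is immediate.
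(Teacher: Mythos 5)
Your argument is correct, and since the paper itself gives no inline proof of Theorem \ref{3.2.19a} (it only cites \cite{Gallego6}, Theorem 2.2.15), there is nothing to contrast: dualizing the presentation, reading $\mathrm{Ext}^1_S(M,S)$ off the truncated long exact sequence as $\mathrm{coker}(f_1^*)=M^T$, and closing the cycle (i)$\Rightarrow$(ii)$\Rightarrow$(iii)$\Rightarrow$(iv)$\Leftrightarrow$(v)$\Rightarrow$(i) via the splitting lemma is exactly the standard matrix-homological proof that the cited result follows. The one point you flag, the left/right bookkeeping, does come out right under the paper's convention $m(g_1f_1)=(F_1^TG_1^T)^T$: $g_1f_1=i_{S^s}$ is literally $F_1^TG_1^T=I_s$, i.e.\ a right inverse of $F_1^T$, and the matrix of $f_1^*$ on right modules is $F_1^T$, so (iii)$\Rightarrow$(iv) also matches.
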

\begin{proof}
See \cite{Gallego6}, Theorem 2.2.15.
\end{proof}
Finally, regarding free modules, we include below a matrix constructive characterization.
\begin{lemma}\label{matricialhermite2}
Let $S$ be a ring and $M$ a stably free $S$-module given by a
minimal presentation
$S^s\xrightarrow{f_1}S^r\xrightarrow{f_0}M\rightarrow 0$. Let
$g_1:S^r\to S^s$ such that $g_1f_1=i_{S^s}$. Then the following
conditions are equivalent:
\begin{enumerate}
\item[\rm (i)]$M$ is free of dimension $r-s$.
\item[\rm (ii)]There exists a matrix $U\in GL_r(S)$ such that
$UG_1^T=\begin{bmatrix}I_s\\
0\end{bmatrix}$, where $G_1$ is the matrix of $g_1$ in the canonical bases. In such case, the last
$r-s$ columns of $U^T$ conform a basis for $M$. Moreover, the first $s$ columns of $U^T$ conform
the matrix $F_1$ of $f_1$ in the canonical bases.
\item[\rm (iii)]There exists
a matrix $V\in GL_r(S)$ such that $G_1^T$ coincides with the first
$s$ columns of $V$, i.e., $G_1^T$ can be completed to an
invertible matrix $V$ of $GL_r(S)$.
\end{enumerate}
\end{lemma}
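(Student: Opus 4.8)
The plan is to prove the chain $(i)\Rightarrow(ii)\Rightarrow(iii)\Rightarrow(i)$, the structural engine being that the relation $g_1f_1=i_{S^s}$ already forces $f_1$ to be injective and makes $p:=f_1g_1$ an idempotent endomorphism of $S^r$ with $\mathrm{im}(p)=\mathrm{im}(f_1)$ and $\ker(p)=\ker(g_1)$. Hence $S^r=\mathrm{im}(f_1)\oplus\ker(g_1)$, where $f_1$ restricts to an isomorphism $S^s\cong\mathrm{im}(f_1)$ (so the columns of $F_1$ are a basis of $\mathrm{im}(f_1)$) and, using that $\ker f_0=\mathrm{im}(f_1)$ for the given presentation together with $\mathrm{im}(f_1)\cap\ker(g_1)=0$, $f_0$ restricts to an isomorphism $\ker(g_1)\cong M$. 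In particular $M$ is realized concretely as the free-of-rank-$s$ complement's partner $\ker(g_1)$ inside $S^r$, and it is this identification that ties freeness of $M$ to the matrix-completion statements.

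For $(i)\Rightarrow(ii)$: if $M$ is free of dimension $r-s$ then so is $\ker(g_1)$; choose a basis $w_1,\dots,w_{r-s}$ of $\ker(g_1)$ and let $B$ be the $r\times(r-s)$ matrix with these columns. Since the columns of $F_1$ form a basis of $\mathrm{im}(f_1)$, the columns of $B$ form a basis of $\ker(g_1)$, and these two submodules are complementary in $S^r$, the matrix $U^T:=[\,F_1\mid B\,]$ belongs to $GL_r(S)$. Then, using $G_1F_1=I_s$ and $G_1B=0$ (the columns of $B$ lie in $\ker g_1$),
\[
UG_1^T=\begin{bmatrix}F_1^T G_1^T\\ B^TG_1^T\end{bmatrix}=\begin{bmatrix}(G_1F_1)^T\\ (G_1B)^T\end{bmatrix}=\begin{bmatrix}I_s\\ 0\end{bmatrix}.
\]
By construction the first $s$ columns of $U^T$ are the columns of $F_1$, and the last $r-s$ columns are $w_1,\dots,w_{r-s}$, whose images $f_0(w_1),\dots,f_0(w_{r-s})$ form a basis of $M$; thus this choice of $U$ witnesses (ii) together with the two "moreover" assertions.

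The step $(ii)\Rightarrow(iii)$ is immediate: from $UG_1^T=\left[\begin{smallmatrix}I_s\\0\end{smallmatrix}\right]$ with $U\in GL_r(S)$ we get $G_1^T=U^{-1}\left[\begin{smallmatrix}I_s\\0\end{smallmatrix}\right]$, i.e. $G_1^T$ is precisely the block of the first $s$ columns of $V:=U^{-1}\in GL_r(S)$. For $(iii)\Rightarrow(i)$, write the completion as $V=[\,G_1^T\mid B\,]\in GL_r(S)$ and let $b\colon S^r\to S^{r-s}$ be the homomorphism with matrix $B^T$; then $(g_1,b)\colon S^r\to S^s\oplus S^{r-s}$ has matrix $\left[\begin{smallmatrix}G_1\\ B^T\end{smallmatrix}\right]=V^T\in GL_r(S)$, hence is an isomorphism. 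An isomorphism does not change cokernels, so $M=\mathrm{coker}(f_1)\cong\mathrm{coker}\big((g_1,b)\circ f_1\big)$; but $(g_1,b)\circ f_1=(g_1f_1,\,bf_1)=(i_{S^s},\,bf_1)\colon S^s\to S^s\oplus S^{r-s}$, and the cokernel of any map of the form $(i_{S^s},h)$ is free of dimension $r-s$ (via $(a,c)\mapsto c-h(a)$). Hence $M\cong S^{r-s}$.

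The arithmetic involved is short; the only genuinely delicate parts are the bookkeeping with the matrix conventions — tracking which composite corresponds to which matrix product and where each transpose falls — and the conceptual move of presenting $M$ in the forward direction as the concrete complement $\ker(g_1)$ of $\mathrm{im}(f_1)$ in $S^r$ (so that a basis of $M$ and the columns of $F_1$ assemble into a change-of-basis matrix) and, in the backward direction, of recognizing $\mathrm{coker}(f_1)$ after transporting it through the isomorphism $(g_1,b)$. Everything rests on the split exact sequence attached to $g_1f_1=i_{S^s}$, so I expect no obstruction beyond this bookkeeping.
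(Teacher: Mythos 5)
Your overall route is sound, and it is the natural one: from $g_1f_1=i_{S^s}$ you get the splitting $S^r=\mathrm{im}(f_1)\oplus\ker(g_1)$ with $f_1$ carrying the canonical basis to the columns of $F_1$ and $f_0$ restricting to an isomorphism $\ker(g_1)\cong M$; then (i)$\Rightarrow$(ii) by assembling $U^T=[\,F_1\mid B\,]$ from the two bases, (ii)$\Rightarrow$(iii) by passing to $V:=U^{-1}$, and (iii)$\Rightarrow$(i) by transporting $\mathrm{coker}(f_1)$ through the isomorphism $(g_1,b)$ and identifying the cokernel of $(i_{S^s},bf_1)$ with $S^{r-s}$. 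Note that the paper itself gives no argument for this lemma (it only cites Lemma 2.3.5 of \cite{Gallego6}), so there is no in-text proof to compare with; your argument is the expected matrix-completion proof and is consistent with how the lemma is used in Section \ref{ComputingBases}.

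What you must repair is exactly the bookkeeping you flagged: several justifications silently use $(AB)^T=B^TA^T$, or ``$V\in GL_r(S)$ implies $V^T\in GL_r(S)$'', and both fail over the noncommutative rings this paper is concerned with. Concretely: (a) in the display for (i)$\Rightarrow$(ii) you rewrite $F_1^TG_1^T=(G_1F_1)^T$, $B^TG_1^T=(G_1B)^T$ and invoke $G_1F_1=I_s$, $G_1B=0$; under the paper's conventions ($m(g_1f_1)=(F_1^TG_1^T)^T$ and $g_1(w)=(w^TG_1^T)^T$) the hypotheses give directly $F_1^TG_1^T=I_s$ and $B^TG_1^T=0$, which is all you need, whereas $G_1F_1=I_s$ and $G_1B=0$ are neither implied nor needed; (b) you assert $U^T=[\,F_1\mid B\,]\in GL_r(S)$, while the column-basis criterion of the paper yields (and condition (ii) requires) $U=(U^T)^T\in GL_r(S)$; (c) in (iii)$\Rightarrow$(i) you write that the matrix of $(g_1,b)$ is $V^T\in GL_r(S)$, but the criterion ``a map is an isomorphism iff the transpose of its matrix is invertible'' asks for invertibility of $(V^T)^T=V$, which is precisely hypothesis (iii); $V^T$ itself need not be invertible. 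In every instance the identity actually needed is the one the conventions hand you, so the structure of your proof survives after restating these steps, but as written the intermediate equalities are false over a general noncommutative ring. A final remark: your splitting argument in fact shows that for \emph{any} $U\in GL_r(S)$ with $UG_1^T=\left[\begin{smallmatrix}I_s\\0\end{smallmatrix}\right]$ the last $r-s$ columns of $U^T$ lie in $\ker(g_1)$, generate it, and hence map under $f_0$ to a basis of $M$ --- the form in which the lemma is applied in the basis algorithm --- while the claim that the first $s$ columns equal $F_1$ only holds for a suitable choice of $U$ such as the one you construct.
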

\begin{proof}
See \cite{Gallego6}, Lemma 2.3.5.
\end{proof}
Some definitions, and elementary properties are necessary in what follows. These can also be
reviewed in \cite{Gallego3}. If $S$ is an arbitrary ring, $S^r$ denotes the left $S$-module of
columns of size $r\times 1$; for each  $S$-homomorphism $S^s\xrightarrow{f}S^r$, there is a matrix
associated to $f$ in the canonical bases of $S^r$ and $S^s$, denoted $F:=m(f)$, and disposed by
columns, i.e., $F\in M_{r\times s}(S)$; moreover, if $a\in S^s$, then
$f(\textbf{\emph{a}})=(\textbf{\emph{a}}^TF^T)^T$. It is straightforward to show that function
$m:Hom_{S}(S^s,S^r)\rightarrow M_{r\times s}(S)$ is bijective; and that if $S^r\xrightarrow{g}S^p$
is a homomorphism, then the matrix of $gf$ in the canonical bases is $m(gf)=(F^TG^T)^T$. Thus,
$f:S^r\rightarrow S^r$ is an isomorphism if and only if $F^T\in GL_r(S)$, and if $C\in M_r(S)$, we
have that columns of $C$ conform a basis of $S^r$ if and only if $C^T\in GL_r(S)$. When $S$ is
commutative, or when we consider right modules instead of left modules, we have that
$f(\textbf{\emph{a}})=F\textbf{\emph{a}}$ and, in such cases, the matrix of a compose homomorphism
$gf$ is given by $m(gf)=m(g)m(f)$. Further, $f:S^r\rightarrow S^r$ is an isomorphism if and only if
$F\in GL_r(S)$; besides, $C\in GL_r(S)$ if and only if its columns conform a basis of $S^r$ (see
section 1.1 in \cite{Gallego3}). Now, let $S$ be a ring; we say that $S$ satisfies the \textit{rank
condition} {\rm(}$\mathcal{RC}${\rm)} if for any integers $r,s\geq 1$, if $S^r\xrightarrow{f} S^s$
is an epimorphism, then $r\geq s$. Furthermore, $S$ is an $\mathcal{IBN}$ ring
{\rm(}\textit{invariant basis number}{\rm)} if for any integers $r,s\geq 1$, $S^r\cong S^s$ if and
only if $r=s$. It is well known that $\mathcal{RC}$ implies $\mathcal{IBN}$. From now on we will
assume that all rings considered in the present paper are $\mathcal{RC}$. We have the the following
elementary characterization for {\rm(}$\mathcal{RC}${\rm)} rings.
\begin{proposition}\label{2.1.4}
Let $S$ be a ring.
\begin{enumerate}
\item[\rm (i)]$S$ is $\mathcal{RC}$ if and only if given any matrix $F\in M_{s\times
r}(S)$ the following condition holds:
\begin{center}
if $F$ has a right inverse then $r\geq s$.
\end{center}
\item[\rm (ii)]$S$ is $\mathcal{RC}$ if and only if given any matrix $F\in M_{s\times
r}(S)$ the following condition holds:
\begin{center}
if $F$ has a left inverse then $s\geq r$.
\end{center}
\end{enumerate}
\end{proposition}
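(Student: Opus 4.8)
The plan is to reduce both equivalences to a single elementary fact about the dictionary $m\colon \mathrm{Hom}_S(S^r,S^s)\to M_{s\times r}(S)$ recalled above, namely: a homomorphism $f\colon S^r\to S^s$ with matrix $F=m(f)$ is an epimorphism if and only if $F$ has a right inverse. So the first step is to prove this. For the ``if'' part, suppose $FG=I_s$ with $G\in M_{r\times s}(S)$ and let $g\colon S^s\to S^r$ be the homomorphism with $m(g)=G$; the composition rule gives $m(fg)=(G^TF^T)^T=FG=I_s=m(i_{S^s})$, hence $fg=i_{S^s}$ by injectivity of $m$, so $f$ admits a section and is surjective. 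For the ``only if'' part, since $S^s$ is free, hence projective, and $f$ is surjective, the identity $i_{S^s}$ lifts along $f$: choosing for each canonical basis vector $e_i\in S^s$ a preimage under $f$ defines $h\colon S^s\to S^r$ with $fh=i_{S^s}$, and then $FH=m(fh)=I_s$ where $H=m(h)$, so $H$ is a right inverse of $F$.

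Granting this equivalence, part (i) is essentially a translation: for fixed $r,s\ge 1$, the matrices in $M_{s\times r}(S)$ admitting a right inverse are, via $m$, exactly the matrices of the epimorphisms $S^r\to S^s$, so the assertion ``every $F\in M_{s\times r}(S)$ with a right inverse satisfies $r\ge s$, for all $r,s\ge1$'' is word for word the rank condition ``every epimorphism $S^r\to S^s$ forces $r\ge s$''. Part (ii) I would then obtain from (i) by transposition: a matrix $F\in M_{s\times r}(S)$ has a left inverse precisely when $F^T\in M_{r\times s}(S)$ has a right inverse, and as $F$ runs through $M_{s\times r}(S)$ its transpose runs through $M_{r\times s}(S)$; applying the criterion of (i) to $F^T$, which has $r$ rows and $s$ columns, yields ``$F^T$ has a right inverse $\Rightarrow s\ge r$'', i.e.\ exactly the implication of (ii) (and reading the equivalence of (i) with $\mathcal{RC}$ backwards shows conversely that (ii) forces $\mathcal{RC}$). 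Equivalently, and in the same spirit as (i): $F\in M_{s\times r}(S)$ has a left inverse $G$ iff $G\in M_{r\times s}(S)$ has the right inverse $F$, iff the homomorphism $g\colon S^s\to S^r$ with $m(g)=G$ is an epimorphism, and then $\mathcal{RC}$ applied to $g$ gives $s\ge r$.

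I do not expect a genuine obstacle. The only points requiring care are the bookkeeping of transposes imposed by the left-module convention $f(\mathbf a)=(\mathbf a^TF^T)^T$ — which is why it is one-sided invertibility of matrices, not of homomorphisms, that appears in the statement — and the use of the fact that epimorphisms between free modules split, needed in the ``only if'' half of the key equivalence, which is entirely standard.
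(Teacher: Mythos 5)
Your reduction rests on the claim that, under the paper's left-module convention, a homomorphism $f\colon S^r\to S^s$ with $F=m(f)\in M_{s\times r}(S)$ is an epimorphism if and only if $F$ has a right inverse, and your proof of it computes $m(fg)=(G^TF^T)^T=FG$ (and, dually, $FH=m(fh)$). That identification silently uses that transposition is multiplicative, i.e.\ $(AB)^T=B^TA^T$, which holds over commutative rings but fails over the noncommutative rings this paper is about: the $(i,j)$ entry of $(G^TF^T)^T$ is $\sum_k G_{kj}F_{ik}$, not $\sum_k F_{ik}G_{kj}$. With the convention $f(\textbf{\emph{a}})=(\textbf{\emph{a}}^TF^T)^T$, what is actually equivalent to ``$f$ is onto'' is that the rows of $F^T$ generate $S^s$ on the left, i.e.\ that $F^T$ has a \emph{left} inverse --- not that $F$ has a right inverse. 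Already for $1\times 1$ matrices this matters: if $ab=1\neq ba$, then $F=(a)$ is right invertible while the associated map $x\mapsto xa$ is not surjective. The same error invalidates your first route to (ii): $LF=I_r$ does not transpose to $F^TL^T=I_r$ over a noncommutative ring, so ``$F$ has a left inverse iff $F^T$ has a right inverse'' is false in general; and your alternative route to (ii) invokes the same false key lemma (``$m(g)=G$ right invertible iff $g$ epi'').

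The statement itself is fine, but the correct dictionary is the one the paper uses in Proposition \ref{LeftInverse}: to $F\in M_{s\times r}(S)$ attach $f^t\colon S^s\to S^r$, $\textbf{\emph{a}}\mapsto(\textbf{\emph{a}}^TF)^T$, whose image is the left submodule generated by the rows of $F$; then $F$ has a left inverse if and only if $f^t$ is an epimorphism. This gives (ii) directly in both directions (an epimorphism $S^r\to S^s$ with matrix $H$ corresponds to $H^T\in M_{r\times s}(S)$ being left invertible). For (i) one must pass to the companion matrix: if $FG=I_s$ then $F$ is a left inverse of $G\in M_{r\times s}(S)$, so the map attached to $G$ is an epimorphism $S^r\to S^s$ and $\mathcal{RC}$ yields $r\geq s$; conversely, an epimorphism $S^r\to S^s$ produces $L\in M_{s\times r}(S)$ with $LH^T=I_s$, i.e.\ a right-invertible $s\times r$ matrix, and the condition in (i) gives $r\geq s$. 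In short, the epimorphism witnessing (i) is built from the right inverse $G$, not from $F$ itself, and (ii) cannot be deduced from (i) by transposing; as written, your argument has a genuine gap exactly at these points.
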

\begin{proof}
C.f. \cite{Gallego3}, Proposition 2.
\end{proof}
\section{Skew $PBW$ extensions}\label{definitionexamplesspbw}
\noindent In this section we recall the definition of skew $PBW$ (Poincaré-Birkhoff-Witt)
extensions defined firstly in \cite{Gallego2}, and we will review also some basic properties about
the polynomial interpretation of this kind of noncommutative rings. Two particular subclasses of
these extensions are recalled also.
\begin{definition}\label{gpbwextension}
Let $R$ and $A$ be rings. We say that $A$ is a \textit{skew $PBW$ extension of $R$} $($also called
a $\sigma-PBW$ extension of $R$$)$ if the following conditions hold:
\begin{enumerate}
\item[\rm (i)]$R\subseteq A$.
\item[\rm (ii)]There exist finite elements $x_1,\dots ,x_n\in A$ such $A$ is a left $R$-free module with basis
\begin{center}
${\rm Mon}(A):= \{x^{\alpha}=x_1^{\alpha_1}\cdots x_n^{\alpha_n}\mid \alpha=(\alpha_1,\dots
,\alpha_n)\in \mathbb{N}^n\}$.
\end{center}
\item[\rm (iii)]For every $1\leq i\leq n$ and $r\in R-\{0\}$ there exists $c_{i,r}\in R-\{0\}$ such that
\begin{equation}\label{sigmadefinicion1}
x_ir-c_{i,r}x_i\in R.
\end{equation}
\item[\rm (iv)]For every $1\leq i,j\leq n$ there exists $c_{i,j}\in R-\{0\}$ such that
\begin{equation}\label{sigmadefinicion2}
x_jx_i-c_{i,j}x_ix_j\in R+Rx_1+\cdots +Rx_n.
\end{equation}
Under these conditions we will write $A:=\sigma(R)\langle x_1,\dots ,x_n\rangle$.
\end{enumerate}
\end{definition}

A particular case of skew $PBW$ extension is when all derivations $\delta_i$ are zero. Another
interesting case is when all $\sigma_i$ are bijective and the constants $c_{ij}$ are invertible. We
recall the following definition (cf. \cite{Gallego2}).
\begin{definition}\label{sigmapbwderivationtype}
Let $A$ be a skew $PBW$ extension.
\begin{enumerate}
\item[\rm (a)]
$A$ is quasi-commutative if the conditions {\rm(}iii{\rm)} and {\rm(}iv{\rm)} in Definition
\ref{gpbwextension} are replaced by
\begin{enumerate}
\item[\rm (iii')]For every $1\leq i\leq n$ and $r\in R-\{0\}$ there exists $c_{i,r}\in R-\{0\}$ such that
\begin{equation}
x_ir=c_{i,r}x_i.
\end{equation}
\item[\rm (iv')]For every $1\leq i,j\leq n$ there exists $c_{i,j}\in R-\{0\}$ such that
\begin{equation}
x_jx_i=c_{i,j}x_ix_j.
\end{equation}
\end{enumerate}
\item[\rm (b)]$A$ is bijective if $\sigma_i$ is bijective for
every $1\leq i\leq n$ and $c_{i,j}$ is invertible for any $1\leq i<j\leq n$.
\end{enumerate}
\end{definition}
A remarkable property satisfies by skew $PBW$ extensions is presented below, which at the same time assures us that the algorithms used finish.
\begin{proposition}[Hilbert Basis Theorem]\label{1.3.4}
Let $A$ be a bijective skew $PBW$ extension of $R$. If $R$ is a left $($right$)$ Noetherian ring
then $A$ is also a left $($right$)$ Noetherian ring.
\end{proposition}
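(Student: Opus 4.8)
The plan is to filter $A$ by total degree, to recognize the associated graded ring as a \emph{quasi-commutative} bijective skew $PBW$ extension of $R$, and then to reduce to the classical Hilbert Basis Theorem for skew polynomial rings; we argue the left Noetherian case, the right one being symmetric. For $m\geq 0$ let $F_m$ be the left $R$-submodule of $A$ generated by $\{x^\alpha:|\alpha|\leq m\}$, with $|\alpha|=\alpha_1+\cdots+\alpha_n$, and put $F_{-1}=0$. By part (ii) of Definition \ref{gpbwextension} each $F_m$ is a free left $R$-module, $F_0=R$, $F_m\subseteq F_{m+1}$, and $\bigcup_{m\geq 0}F_m=A$. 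The first thing to check is that $\{F_m\}$ is a ring filtration, i.e. $F_iF_j\subseteq F_{i+j}$; since the monomials form a left $R$-basis it suffices to prove $x^\alpha x^\beta\in F_{|\alpha|+|\beta|}$. Relation (\ref{sigmadefinicion1}) gives $x_ir\in Rx_i+R\subseteq F_1$ for $r\in R$, and relation (\ref{sigmadefinicion2}) gives $x_jx_i\in Rx_ix_j+R+Rx_1+\cdots+Rx_n\subseteq F_2$; moving the letters of $x^\beta$ to the right past those of $x^\alpha$ and reordering them, an induction on $|\alpha|+|\beta|$ (each reordering swap preserves the degree while the correction terms it produces have strictly smaller degree) settles the claim.

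Now consider the associated graded ring $\mathrm{gr}(A)=\bigoplus_{m\geq 0}F_m/F_{m-1}$ and let $\overline{x}_i\in F_1/F_0$ be the image of $x_i$. In $\mathrm{gr}(A)$ the defining relations of $A$ become the homogeneous identities $\overline{x}_ir=c_{i,r}\overline{x}_i$ and $\overline{x}_j\overline{x}_i=c_{i,j}\overline{x}_i\overline{x}_j$, because the remaining summands on the right-hand sides of (\ref{sigmadefinicion1}) and (\ref{sigmadefinicion2}) have strictly lower degree and hence vanish in the relevant quotient. Moreover $\mathrm{gr}(A)$ is again a free left $R$-module, with basis $\{\overline{x}^\alpha:\alpha\in\mathbb{N}^n\}$, since $F_m/F_{m-1}\cong\bigoplus_{|\alpha|=m}R\overline{x}^\alpha$, and it inherits from $A$ the bijectivity of the $\sigma_i$ and the invertibility of the $c_{i,j}$. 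Hence $\mathrm{gr}(A)$ is a quasi-commutative bijective skew $PBW$ extension of $R$ in the sense of Definition \ref{sigmapbwderivationtype}.

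It therefore remains to treat the quasi-commutative bijective case. Such an extension is isomorphic to an iterated skew polynomial ring $R[z_1;\theta_1][z_2;\theta_2]\cdots[z_n;\theta_n]$ in which every twisting derivation is zero and every $\theta_i$ is an automorphism, the $z_i$ and the $\theta_i$ being assembled from the $x_i$, the $\sigma_i$ and the $c_{i,j}$, and bijectivity of the $\theta_i$ being exactly what the invertibility hypotheses provide (see \cite{Gallego2}). By the classical Hilbert Basis Theorem for skew polynomial rings, if $R$ is left Noetherian and $\theta$ is an automorphism then $R[z;\theta]$ is left Noetherian; iterating $n$ times shows $\mathrm{gr}(A)$ is left Noetherian. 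Finally, $A$ is a ring equipped with an exhaustive positive filtration $\{F_m\}$ ($F_{-1}=0$) whose associated graded ring is left Noetherian, and it is standard that such a ring is itself left Noetherian: given a left ideal $I\subseteq A$ one lifts a finite set of homogeneous generators of the associated graded left ideal $\mathrm{gr}(I)\subseteq\mathrm{gr}(A)$ to elements of $I$ and checks by induction on degree that these generate $I$. Thus $A$ is left Noetherian.

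The main obstacle is the passage to the quasi-commutative case: one must verify carefully that forming $\mathrm{gr}(A)$ does not collapse the monomial $R$-basis (so that $\mathrm{gr}(A)$ genuinely is a skew $PBW$ extension of $R$) and that a quasi-commutative bijective skew $PBW$ extension really is an iterated Ore extension of \emph{automorphism} type, for it is precisely there that the bijectivity hypothesis enters and makes the classical skew-polynomial Hilbert Basis Theorem applicable at each step of the iteration.
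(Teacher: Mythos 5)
Your proof is correct, and it is essentially the argument behind the paper's citation (Lezama--Reyes, to which the paper defers without giving its own proof): filter $A$ by total degree, identify $\mathrm{gr}(A)$ as a quasi-commutative bijective skew $PBW$ extension, realize that ring as an iterated skew polynomial ring of automorphism type, apply the classical skew Hilbert Basis Theorem, and transfer Noetherianity back along the positive exhaustive filtration. Nothing further is needed.
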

\begin{proof}
See \cite{lezamareyes1}, Corollary 2.4.
\end{proof}
Since the objects studied in the present paper are  skew $PBW$ extensions, it is necessary to
guarantee the $\mathcal{IBN}$ and $\mathcal{RC}$ properties for these rings. For this, we have the following important fact:
\begin{theorem}\label{618}
Let $A$ be a skew $PBW$ extension of a ring $R$. Then, $A$ is $\mathcal{RC}$ $(\mathcal{IBN})$ if
and only if $R$ is $\mathcal{RC}$ $(\mathcal{IBN})$.
\end{theorem}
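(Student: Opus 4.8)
The claim is a conjunction of two equivalences, one for $\mathcal{RC}$ and one for $\mathcal{IBN}$; I would run both in parallel, using Proposition \ref{2.1.4} to rephrase $\mathcal{RC}$ in terms of one-sided invertibility of matrices whenever convenient. In each case the implication ``$R$ good $\Rightarrow$ $A$ good'' is the one carrying all the content.

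\emph{($A$ good $\Rightarrow$ $R$ good).} By Definition \ref{gpbwextension}(ii) we have $R\subseteq A$ with $A$ a free left $R$-module, so the right-exact functor $A\otimes_R(-)$ sends $R^k$ to $A^k$ as a left $A$-module. Hence an $R$-isomorphism $R^m\cong R^n$ gives an $A$-isomorphism $A^m\cong A^n$, and an $R$-epimorphism $R^r\twoheadrightarrow R^s$ gives an $A$-epimorphism $A^r\twoheadrightarrow A^s$. Thus $\mathcal{IBN}$ (resp. $\mathcal{RC}$) for $A$ forces $m=n$ (resp. $r\ge s$), that is, $\mathcal{IBN}$ (resp. $\mathcal{RC}$) for $R$.

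\emph{($R$ good $\Rightarrow$ $A$ good).} Filter $A$ by total degree in $x_1,\dots,x_n$, $F_m=\bigoplus_{|\alpha|\le m}Rx^\alpha$ (relations (iii)--(iv) of Definition \ref{gpbwextension} make this a ring filtration), and form the Rees ring $\widetilde A=\bigoplus_{m\ge0}F_mt^m\subseteq A[t]$. It is $\mathbb N$-graded with $\widetilde A_0=F_0=R$, and $t$ is a central non-zero-divisor with $\widetilde A/t\widetilde A\cong{\rm Gr}(A)$ (a \emph{quasi-commutative} skew $PBW$ extension of $R$), $\widetilde A/(1-t)\widetilde A\cong A$ and $\widetilde A[t^{-1}]\cong A[t,t^{-1}]$. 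Since $\widetilde A$ is $\mathbb N$-graded with degree-$0$ part $R$, the projection $\widetilde A\twoheadrightarrow R$ is a ring homomorphism; and a ring mapping onto an $\mathcal{RC}$ (resp. $\mathcal{IBN}$) ring is itself $\mathcal{RC}$ (resp. $\mathcal{IBN}$) — base-change a right inverse of a matrix, resp. an isomorphism of free modules. So $\widetilde A$, and likewise ${\rm Gr}(A)$, is $\mathcal{RC}$ (resp. $\mathcal{IBN}$) as soon as $R$ is.

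It remains to descend from $\widetilde A$ to $A$. As $A$ is (isomorphic to) a subring of $\widetilde A[t^{-1}]=A[t,t^{-1}]$ over which the latter is free, with basis $\{t^k:k\in\mathbb Z\}$, it is enough to show that $\widetilde A[t^{-1}]$ inherits the property from $\widetilde A$, i.e. that inverting the central non-zero-divisor $t$ preserves it here. Concretely, a right-invertible (resp. invertible) matrix over $A\cong\widetilde A[t^{-1}]/(1-t)$ homogenizes, through the Rees ring, to matrices $P_0,Q_0$ over the graded ring $\widetilde A$ with $P_0Q_0=t^cI$ (resp. also $Q_0P_0=t^cI$) for some $c\ge0$; since $t$ is a non-zero-divisor, $P_0$ and $Q_0$ are injective and their cokernels are annihilated by $t^c$. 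The wanted conclusion — the rank inequality, resp. equality — must be squeezed out of this data, and this is the genuine obstacle: over a filtered ring the principal symbol of a product of matrices need not equal the product of the symbols (cancellation in top degree), so one cannot simply reduce $P_0Q_0=t^cI$ modulo $t$. Getting past it requires a careful analysis of the $t$-adic structure of $\mathrm{coker}(P_0)$ and $\mathrm{coker}(Q_0)$, fed by the fact that ${\rm Gr}(A)=\widetilde A/t\widetilde A$ already has the property — equivalently, an appeal to the standard transfer of $\mathcal{IBN}$ and $\mathcal{RC}$ along Rees rings of filtered rings. That is the only step that is not purely formal, and it is where I expect the real work.
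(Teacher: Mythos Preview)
The paper does not prove this theorem at all: its ``proof'' is the one-line citation \emph{See \cite{Gallego4}, Theorem 2.9}. So there is no in-paper argument to compare against; the question is simply whether your proposal stands on its own.

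Your direction ``$A$ good $\Rightarrow$ $R$ good'' is fine (and in fact needs nothing about freeness of $A$ over $R$: the ring inclusion $R\hookrightarrow A$ already pushes a right inverse, resp.\ two-sided inverse, of a matrix over $R$ to one over $A$). Your observation that the Rees ring $\widetilde A$ and the associated graded ${\rm Gr}(A)$ are $\mathbb{N}$-graded with degree-zero piece $R$, and therefore inherit $\mathcal{RC}$/$\mathcal{IBN}$ from $R$ via the augmentation $\widetilde A\twoheadrightarrow R$, is also correct and is the right first move.

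The gap is exactly where you say it is, and it is not merely a matter of ``real work'' left to the reader: as written, the argument does not close. You reduce to showing that $\widetilde A[t^{-1}]$ is $\mathcal{RC}$ (resp.\ $\mathcal{IBN}$) knowing that $\widetilde A$ is; but localization at a central regular element does \emph{not} preserve $\mathcal{RC}$ or $\mathcal{IBN}$ in general, so this reduction is not valid without further input. Your fallback---``an appeal to the standard transfer of $\mathcal{IBN}$ and $\mathcal{RC}$ along Rees rings of filtered rings''---is precisely the statement to be proved, not a tool you can invoke. The homogenization $P_0Q_0=t^cI$ you wrote down reduces modulo $t$ to $\bar P_0\bar Q_0=0$ in ${\rm Gr}(A)$, which carries no rank information, and you have given no mechanism to extract $r\ge s$ (resp.\ $r=s$) from the $t$-adic data. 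In short: the architecture (filtration, Rees ring, associated graded) is the right one, but the one genuinely nontrivial implication---${\rm Gr}(A)$ has $\mathcal{RC}$/$\mathcal{IBN}$ $\Rightarrow$ $A$ has $\mathcal{RC}$/$\mathcal{IBN}$---is asserted rather than proved, and that is the whole content of the hard direction. The argument in \cite{Gallego4} supplies exactly this missing step; your proposal does not.
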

\begin{proof}
See \cite{Gallego4}, Theorem 2.9.
\end{proof}
\begin{remark}
We developed the Gröbner bases theory for any bijective skew $PBW$ extension. \-Spe\-ci\-fi\-ca\-lly, we established a Buchberger's algorithm for these rings, the computation of syzygies module, as well as some applications as  membership problem, calculation of intersections, quotients, presentation of a module, computing free resolutions, the kernel and image of an homomorphism (see Chapter 5 and Chapter 6 in \cite{Gallego6}, or \cite{Gallego5}).
\end{remark}
\section{Computing the inverse of a matrix}
\noindent In this section we will present an algorithm that determines if a given rectangular
matrix over a bijective skew $PBW$ extension is left invertible, and in such case, the algorithm
computes one of its left inverses. A similar algorithm will be constructed for the right side case.
We start with the following elementary fact about left invertible matrices.
\begin{proposition}\label{LeftInverse}
Let $F$ be a rectangular matrix of size $r\times s$ with entries in a ring $S$. If $F$ has left
inverse, then $r\geq s$. Moreover, $F$ has a left inverse if and only if the module generated by
the rows of $F$ coincides with $S^s$.
\begin{proof}
The first statement follows from the fact that we are assuming the $S$ is $\mathcal{RC}$
(see Proposition \ref{2.1.4}). Now, suppose that $F$ has a left inverse $L\in
M_{s\times r}(S)$, i.e., $LF=I_s$. Define the following $S$-homomorphisms
\begin{equation*}
\begin{aligned}
f^{t}: S^r &\to S^s   \\
\textbf{\emph{a}} &\mapsto (\emph{\textbf{a}}^TF)^T
\end{aligned}
\qquad \qquad
\begin{aligned}
l^t:S^s &\to S^r\\
\textbf{\emph{b}}&\mapsto (\textbf{\emph{b}}^TL)^T,
\end{aligned}
\end{equation*}
then $m(f^t)=F^T$ and $m(l^t)=L^T$. Whence, $m(f^t
\circ l^t)=(LF)^T=I_{s}^{T}=I_s$, i.e, $f^t$ is an epimorphism. Hence, $Im(f^t)=S^s$, i.e., the
left submodule generated by the rows of $F$ coincides with the free module $S^s$. Conversely,
suppose that the module generated by the rows of $F$ coincides wit $S^s$, then for $f^t$ defined as
above, there exist $\textbf{\emph{a}}_{1}\ldots, \textbf{\emph{a}}_s\in S^r$ such that
$f^t(\textbf{\emph{a}}_i)=\textbf{\emph{e}}_{i}$ for each $1\leq i\leq s$, and where
$\textbf{\emph{e}}_{1},\ldots, \textbf{\emph{e}}_s$ denote the canonical vectors of $S^s$. Thus, if
$\textbf{\emph{a}}_{i}=\begin{bmatrix} a_{1i}& a_{2i} &\cdots& a_{ri} \end{bmatrix}^T$, we have
\[\textbf{\emph{a}}_{i}^TF=\begin{bmatrix} a_{1i}& a_{2i} &\cdots& a_{ri} \end{bmatrix}F=a_{1i}F_{(1)}+\cdots+ a_{ri}F_{(r)}=\textbf{\emph{e}}_i,\]
where $F_{(j)}$ denotes the $j$-th row of $F$, $1\leq j\leq r$. Therefore, if $L$ is the matrix
whose rows are the vectors $\textbf{\emph{a}}_{i}^T$, then $LF=I_s$, i.e., $F$ has a left inverse.
\end{proof}
\end{proposition}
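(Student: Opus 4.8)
The statement to prove is Proposition~\ref{LeftInverse}: a rectangular matrix $F$ of size $r\times s$ over an $\mathcal{RC}$ ring $S$ satisfies $r\geq s$ whenever it has a left inverse, and $F$ has a left inverse if and only if the left submodule of $S^s$ generated by the rows of $F$ is all of $S^s$. The plan is to dispose of the inequality $r\geq s$ immediately by invoking Proposition~\ref{2.1.4}(ii), since a left inverse for $F\in M_{r\times s}(S)$ is precisely a left inverse in the sense of that proposition; no further work is needed there.

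For the equivalence, the natural strategy is to translate the matrix condition into a statement about a homomorphism of free modules and use the dictionary set up in the excerpt. I would introduce the $S$-homomorphism $f^t\colon S^r\to S^s$, $\textbf{\emph{a}}\mapsto(\textbf{\emph{a}}^TF)^T$, so that its matrix in the canonical bases is $m(f^t)=F^T$, and observe that the image of $f^t$ is exactly the left submodule of $S^s$ spanned by the rows of $F$ (each canonical basis vector $\textbf{\emph{e}}_i$ of $S^r$ maps to the transpose of the $i$-th row of $F$, and $S$-linear combinations of these give arbitrary linear combinations of the rows). Thus ``rows of $F$ generate $S^s$'' is equivalent to ``$f^t$ is surjective.''

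For the forward direction, given a left inverse $L\in M_{s\times r}(S)$ with $LF=I_s$, I would define the companion homomorphism $l^t\colon S^s\to S^r$, $\textbf{\emph{b}}\mapsto(\textbf{\emph{b}}^TL)^T$, with matrix $L^T$, and compute the matrix of the composite $f^t\circ l^t$ using the composition rule $m(gf)=(F^TG^T)^T$ from the excerpt: this gives $m(f^t\circ l^t)=(LF)^T=I_s$, so $f^t\circ l^t=\mathrm{id}_{S^s}$, hence $f^t$ is an epimorphism and its image is all of $S^s$. For the converse, assuming the rows of $F$ generate $S^s$, I would pick preimages $\textbf{\emph{a}}_1,\dots,\textbf{\emph{a}}_s\in S^r$ with $f^t(\textbf{\emph{a}}_i)=\textbf{\emph{e}}_i$, write $\textbf{\emph{a}}_i$ in coordinates, and read off that $\textbf{\emph{a}}_i^TF=\textbf{\emph{e}}_i$; assembling the row vectors $\textbf{\emph{a}}_i^T$ into a matrix $L\in M_{s\times r}(S)$ then yields $LF=I_s$ directly by comparing rows.

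The argument is essentially bookkeeping with the transpose conventions, so there is no genuine obstacle; the one point that requires care is keeping the left-module matrix conventions straight — in particular that composition of homomorphisms corresponds to $m(gf)=(F^TG^T)^T$ rather than the naive product, and that surjectivity of $f^t$ is what ``rows generate $S^s$'' means (as opposed to the column span). Once those identifications are made explicit, both implications follow in a line or two each.
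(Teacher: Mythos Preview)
Your proposal is correct and follows essentially the same approach as the paper's own proof: invoke Proposition~\ref{2.1.4} for the inequality, introduce the homomorphism $f^t(\textbf{\emph{a}})=(\textbf{\emph{a}}^TF)^T$ with companion $l^t$ to show that a left inverse makes $f^t$ surjective, and for the converse lift the canonical basis vectors and assemble the preimages into the rows of $L$. The only cosmetic difference is that you make the identification ``$\mathrm{Im}(f^t)=$ row span of $F$'' explicit at the outset, which the paper leaves implicit.
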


\begin{corollary}\label{LeftInverseAlg}
Let $A$ be a bijective skew $PBW$ extension and let $F\in M_{r\times s}(A)$ be a rectangular matrix
over $A$. The algorithm below determines if $F$ is left invertible, and in the positive case, it
computes a left inverse of $F$:
\end{corollary}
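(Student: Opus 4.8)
The plan is to turn Proposition~\ref{LeftInverse} into an effective procedure using the Gr\"obner bases machinery for bijective skew $PBW$ extensions. The key observation is that $F\in M_{r\times s}(A)$ has a left inverse if and only if the left submodule of $A^s$ generated by the rows $F_{(1)},\dots,F_{(r)}$ equals all of $A^s$, which in turn holds if and only if each canonical vector $\textbf{\emph{e}}_i$ lies in that submodule. Since we have a membership test for submodules of free modules over $A$ (via reduction against a Gr\"obner basis, as recalled in the Remark), and this test, when membership holds, returns an explicit expression of $\textbf{\emph{e}}_i$ as a left $A$-linear combination of the generators, we can both decide left invertibility and, in the affirmative case, read off a left inverse. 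So the algorithm statement I would write is essentially: (1) compute a Gr\"obner basis $G$ of the left submodule $N=\langle F_{(1)},\dots,F_{(r)}\rangle\subseteq A^s$, together with a transformation matrix expressing each element of $G$ in terms of the rows of $F$; (2) for each $i=1,\dots,s$, reduce $\textbf{\emph{e}}_i$ modulo $G$; if some $\textbf{\emph{e}}_i$ does not reduce to $0$, output ``$F$ is not left invertible'' and stop; (3) otherwise, from the reductions obtain vectors $\textbf{\emph{a}}_i\in A^r$ with $\textbf{\emph{a}}_i^TF=\textbf{\emph{e}}_i^T$ (composing the membership combination with the transformation matrix from step (1)); (4) output the matrix $L\in M_{s\times r}(A)$ whose $i$-th row is $\textbf{\emph{a}}_i^T$; then $LF=I_s$.

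Correctness of this algorithm is exactly Proposition~\ref{LeftInverse}: the ``moreover'' part says $F$ is left invertible iff the row module is $A^s$, which happens iff all $\textbf{\emph{e}}_i\in N$; and the last paragraph of that proof shows that assembling the membership witnesses $\textbf{\emph{a}}_i$ as rows of $L$ yields $LF=I_s$. Termination follows because $A$ is a bijective skew $PBW$ extension over a Noetherian $R$ (Proposition~\ref{1.3.4}), so Buchberger's algorithm for these rings terminates, and the reduction of each $\textbf{\emph{e}}_i$ is a finite process; there are only finitely many $i$ to check. I would state these two points (correctness and termination) briefly after displaying the algorithm, citing Proposition~\ref{LeftInverse} and the Gr\"obner bases results in \cite{Gallego6} (or \cite{Gallego5}) for the existence of the Buchberger algorithm, the membership test, and the bookkeeping of transformation matrices for syzygies/membership.

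The one genuinely substantive point, and the part I expect to need the most care in the write-up, is the bookkeeping: the membership test applied to $\textbf{\emph{e}}_i$ naturally expresses $\textbf{\emph{e}}_i$ as a combination of the \emph{Gr\"obner basis} elements, not of the original rows of $F$, so one must carry along from step (1) the matrix whose entries record how each Gr\"obner basis vector is obtained from $F_{(1)},\dots,F_{(r)}$ (this is standard in the syzygy/Gr\"obner computations for skew $PBW$ extensions but must be invoked explicitly), and then multiply the two combinations together to land back in the rows of $F$. A secondary, purely expository, matter is the left/right transpose conventions: because $A$ is noncommutative and we work with left modules, the natural homomorphism attached to $F$ is $f^t:\textbf{\emph{a}}\mapsto(\textbf{\emph{a}}^TF)^T$ with matrix $F^T$, exactly as set up before Proposition~\ref{LeftInverse}, so the algorithm should be phrased in terms of the row module of $F$ (equivalently, the column module of $F^T$) to stay consistent. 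Everything else is routine. For the right-inverse analogue mentioned in the section's opening sentence, I would note that it is obtained by the symmetric argument working with right Gr\"obner bases (or, equivalently, by transposing and appealing to the opposite-ring structure), and state it as a parallel corollary.
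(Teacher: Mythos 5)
Your proposal is correct and follows essentially the same route as the paper: decide left invertibility by testing, via the division algorithm against a Gr\"obner basis $G$ of the row module of $F$, whether each canonical vector $\textbf{\emph{e}}_i$ lies in that module (correctness by Proposition~\ref{LeftInverse}, termination by Proposition~\ref{1.3.4}), and recover the left inverse by composing the membership coefficients with the transformation matrix expressing $G$ in terms of the rows of $F$ --- exactly the paper's matrices $K$ and $H$ with $L=K^TH^T$. The only cosmetic difference is that the paper begins with the shortcut check $r<s\Rightarrow$ return $0$, which your argument subsumes via the $\mathcal{RC}$ hypothesis.
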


\begin{center}
\fbox{\parbox[c]{12cm}{
\begin{center}
{\rm \textbf{Algorithm for the left inverse of a matrix}}
\end{center}
\begin{description}
\item[]{\rm \textbf{INPUT}:} A rectangular matrix $F\in M_{r\times s}(A)$
\item[]{\rm \textbf{OUTPUT}:} A matrix $L\in M_{s\times r}(A)$ satisfying $LF=I_s$ if it exists, and $0$ in other case
\item[]{\rm \textbf{INITIALIZATION}:}

{\rm \textbf{IF}} $r< s$

\quad {\rm \textbf{RETURN}} $0$

\quad \quad  {\rm \textbf{IF}} $r\geq s$, let
$G:=\{\textbf{\emph{g}}_1,\ldots,\textbf{\emph{g}}_t\}$ be a Gröbner basis for the left submodule
generated by rows of $F$ and let $\{\textbf{\emph{e}}_i\}_{i=1}^s$ be the canonical basis of $A^s$.
Use the division algorithm to verify if $\textbf{\emph{e}}_{i}\in \langle\,_AG\rangle$ for each $1\leq
i \leq s$.

{\rm \textbf{IF}} there exists some $\textbf{\emph{e}}_{i}$ such that $\textbf{\emph{e}}_i\notin
\langle G \rangle$,

\quad {\rm \textbf{RETURN}} $0$

{\rm \textbf{IF}} $\langle G\rangle=A^s$, let $H\in M_{r\times t}(A)$ with the property $G^T=H^TF$,
and consider $K:=[k_{ij}]\in M_{t\times s}$, where the $k_{ij}$'s are such that
$\textbf{\emph{e}}_{i}=k_{1i}\textbf{\emph{g}}_{1}+k_{2i}\textbf{\emph{g}}_2+\cdots+
k_{ti}\textbf{\emph{g}}_{t}$ for $1\leq i\leq s$. Thus, $I_s=K^TG^T$

\quad {\rm \textbf{RETURN}} $L:= K^TH^T$

\end{description}}}
\end{center}

\begin{example}
Let $A=\sigma(\mathbb{Q})\langle x, y \rangle$ defined through the relation $yx=-xy+1$.
Given the matrix
\begin{center}
$F=\begin{bmatrix} 1 &1 \\ xy&0 \\x^2&0\\ 1&y \end{bmatrix},$
\end{center}
we apply the above algorithm in order to verify if $F$ has a left inverse. For this, we compute a
Gröbner basis of the left module generated by the rows of $F$. Considering the deglex order on
$Mon(A)$, with $x\succ y$, and the TOPREV order on $Mon(A^2)$, with
$\textbf{\emph{e}}_1>\textbf{\emph{e}}_2$, a Gröbner basis for $_A\langle F^T\rangle$ is
$\{\textbf{\emph{e}}_1, \textbf{\emph{e}}_2\}$. In consequence, $F$ has a left inverse and, from calculations obtained during
the process of Buchberger's algorithm, we have that
\begin{center}
$L=\begin{bmatrix}xy^2-y&y+1&0&-xy+1\\ -xy^2+y+1 &-y-1&0&xy-1 \end{bmatrix}$
\end{center}
is a left inverse for $F$.
\end{example}

\begin{corollary}\label{SquareInverse}
Let $F$  be a square matrix of size $r\times r$ with entries in a ring $S$. Then, $F$ is invertible
if and only if the rows of $F$ conform a basis of $S^s$.
\begin{proof}
Let $L\in M_{r}(A)$ such that $LF=I_r=FL$. From $LF=I_r$ it follows that the rows of $F$ generate
$S^r$. Let $f^t$ and $l^t$ be like in the proof of Proposition \ref{LeftInverse}; since $FL=I_r$,
then $l^t \circ f^t = i_{S^r}$ and, therefore, $f^t$ is a monomorphism, i.e., $Syz(F^T)=0$. Thus,
the rows of $F$ are linearly independent, and this complete the first implication. Conversely,
since the rows of $F$ generate $S^r$, by Proposition \ref{LeftInverse}, $F$ has a left inverse. Let
$L$ be a such inverse, then $LF=I_r$. We have $FLF=F$, this implies that $(FL-I_r)F=0_r$, but
$Syz(F^T)=0$, then $FL=I_r$, i.e., $F^{-1}=L$.
\end{proof}
\end{corollary}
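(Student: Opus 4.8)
The plan is to derive both directions from Proposition~\ref{LeftInverse} together with the dictionary between homomorphisms of free left modules and their matrices recorded after Proposition~\ref{2.1.4}: to $F\in M_r(S)$ one attaches $f^{t}\colon S^r\to S^r$, $\mathbf{a}\mapsto(\mathbf{a}^{T}F)^{T}$, with $m(f^{t})=F^{T}$, and the rows of $F$ are $S$-linearly independent exactly when $\ker f^{t}=Syz(F^{T})=0$. (The statement is for $r\times r$ matrices, so ``a basis of $S^{s}$'' should be read as ``a basis of $S^{r}$''; this is the right normalization since $S$ is $\mathcal{IBN}$.)

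For the implication ``$F$ invertible $\Rightarrow$ rows of $F$ form a basis of $S^r$'' I would take $L\in M_r(S)$ with $LF=I_r=FL$ and argue in two steps. First, $LF=I_r$ says $F$ has a left inverse, so by the argument in the proof of Proposition~\ref{LeftInverse} applied with $s=r$, the left submodule generated by the rows of $F$ equals $S^r$. Second, for independence I would introduce $l^{t}\colon S^r\to S^r$, $\mathbf{b}\mapsto(\mathbf{b}^{T}L)^{T}$, so that $m(l^{t})=L^{T}$; the composition rule $m(gf)=(m(f)^{T}m(g)^{T})^{T}$ then gives $m(l^{t}\circ f^{t})=(FL)^{T}=I_r$, hence $l^{t}\circ f^{t}=i_{S^r}$, so $f^{t}$ is injective, i.e. $Syz(F^{T})=0$. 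The rows of $F$, being both generating and independent, form a basis of $S^r$.

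For the converse I would start from the basis hypothesis, use its generating half together with Proposition~\ref{LeftInverse} to get $L\in M_r(S)$ with $LF=I_r$, and then promote this one-sided inverse to a two-sided one. Left-multiplying $LF=I_r$ by $F$ yields $F=F(LF)=(FL)F$, hence $(FL-I_r)F=0_r$; reading each row $\mathbf{w}_i^{T}$ of $FL-I_r$, the relation $\mathbf{w}_i^{T}F=0$ means $\mathbf{w}_i\in Syz(F^{T})$, which is $0$ by the independence half of the hypothesis. Therefore $FL-I_r=0_r$, and $F$ is invertible with $F^{-1}=L$.

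The step that needs care is this last one: it is precisely where the \emph{independence} of the rows (and not merely that they generate $S^r$) is used, and the clean way to exploit it is to interpret the rows of $FL-I_r$ as elements of the syzygy module $Syz(F^{T})$ through the homomorphism/matrix correspondence rather than to manipulate the matrices directly. One must also keep the left-module conventions straight throughout — matrices placed by columns, $f(\mathbf{a})=(\mathbf{a}^{T}F^{T})^{T}$, and the transposes appearing in $m(gf)=(m(f)^{T}m(g)^{T})^{T}$ — since a misplaced transpose would break the identification of the rows of $FL-I_r$ with syzygies and of $Syz(F^{T})$ with left relations among the rows of $F$.
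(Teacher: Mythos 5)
Your proof is correct and follows essentially the same route as the paper's: both directions rest on Proposition~\ref{LeftInverse}, the forward direction gets $Syz(F^{T})=0$ from $l^{t}\circ f^{t}=i_{S^{r}}$, and the converse promotes the left inverse to a two-sided one via $(FL-I_{r})F=0_{r}$ and the vanishing of $Syz(F^{T})$. Your extra remarks (reading the rows of $FL-I_{r}$ as syzygies, and correcting $S^{s}$ to $S^{r}$) only make explicit what the paper leaves implicit.
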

\begin{corollary}
Let $A$ be a bijective skew $PBW$ extension and $F\in M_{r}(A)$ a square matrix over $A$. The
algorithm below determines  whether $F$ is invertible, and in the positive case, it computes the inverse
of $F$:

\begin{center}
\fbox{\parbox[c]{12cm}{
\begin{center}
{\rm \textbf{Algorithm for the inverse of a square matrix}}
\end{center}
\begin{description}
\item[]{\rm \textbf{INPUT}:} A square matrix $F\in M_{r}(A)$
\item[]{\rm \textbf{OUTPUT}:} A matrix $L\in M_{r}(A)$ satisfying $LF=I_r=FL$ if it exists, and $0$ in other case
\item[]{\rm \textbf{INITIALIZATION}:}

Use the algorithm in Corollary \ref{LeftInverseAlg} to determine if $F$ is left invertible

{\rm \textbf{IF}} $F$ is not left invertible\\
\quad {\rm \textbf{RETURN}} $0$\\

{\rm \textbf{ELSE}} Compute $Syz(F^T)$

\quad\quad{\rm \textbf{IF}} $Syz(F^T)\neq 0$

\quad\quad {\rm \textbf{RETURN}} $0$

\quad\quad {\rm \textbf{ELSE}} Compute the matrices $H$ and $K$ in the algorithm of Corollary \ref{LeftInverseAlg}\\

{\rm \textbf{RETURN}} $L:=K^TH^T$

\end{description}}}
\end{center}
\end{corollary}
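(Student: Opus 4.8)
The plan is to establish correctness and termination of the algorithm by reducing everything to the two results already in hand: the characterization of invertible square matrices in Corollary \ref{SquareInverse}, and the left-inverse algorithm of Corollary \ref{LeftInverseAlg}. In other words, the "proof" is a verification that each branch of the algorithm returns the right answer and that every subroutine it invokes terminates over a bijective skew $PBW$ extension.

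First I would handle the negative branches. If $F$ is not left invertible, then $F$ cannot be invertible: indeed, an invertible $F$ satisfies $LF = I_r = FL$ for some $L$, so in particular $F$ has the left inverse $L$; hence returning $0$ is correct. The left-invertibility test itself is exactly the algorithm of Corollary \ref{LeftInverseAlg}, whose correctness rests on Proposition \ref{LeftInverse}: $F$ has a left inverse if and only if the left submodule generated by the rows of $F$ equals $A^r$, and this is decided by computing a Gr\"obner basis $G = \{\textbf{\emph{g}}_1,\dots,\textbf{\emph{g}}_t\}$ of ${}_A\langle F^T\rangle$ and testing, via the division algorithm, whether each canonical vector $\textbf{\emph{e}}_i$ lies in $\langle {}_A G\rangle$.

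Next, suppose $F$ is left invertible. By Corollary \ref{SquareInverse}, $F$ is invertible if and only if the rows of $F$ form a basis of $A^r$, which, since they already generate $A^r$, is equivalent to their linear independence, i.e. to $Syz(F^T) = 0$. So computing $Syz(F^T)$ and returning $0$ when it is nonzero is correct. When $Syz(F^T) = 0$, I would reuse the cancellation argument from the proof of Corollary \ref{SquareInverse}: for any left inverse $L$ of $F$ one has $FLF = F$, hence $(FL - I_r)F = 0_r$, and $Syz(F^T) = 0$ forces $FL = I_r$. Applying this to the particular left inverse $L := K^T H^T$ produced by the algorithm of Corollary \ref{LeftInverseAlg} — where $H$ is defined by $G^T = H^T F$ and $K = [k_{ij}]$ by $\textbf{\emph{e}}_i = k_{1i}\textbf{\emph{g}}_1 + \cdots + k_{ti}\textbf{\emph{g}}_t$, so that $LF = I_r$ — shows that $L$ is in fact the two-sided inverse $F^{-1}$, so the final \textbf{RETURN} is correct.

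Finally, termination: since $A$ is a bijective skew $PBW$ extension, the Hilbert Basis Theorem (Proposition \ref{1.3.4}) makes $A$ (left) Noetherian when $R$ is, and the Buchberger algorithm, the division algorithm, and the syzygy computation for modules over $A$ all terminate in this setting, as recalled after Theorem \ref{618}. The only point that needs care — more a matter of bookkeeping than a genuine obstacle — is that the $Syz(F^T) = 0$ cancellation must be applied to the very matrix $L = K^T H^T$ the algorithm hands back; this is legitimate because the cancellation argument works for every left inverse of $F$, not merely for some abstract one.
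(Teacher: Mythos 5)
Your proposal is correct and follows essentially the same route the paper implicitly takes: the paper states this corollary without a separate proof, its justification being exactly Corollary \ref{SquareInverse} (invertibility of a square matrix $\Leftrightarrow$ rows generate $A^r$ and $Syz(F^T)=0$, with the cancellation $(FL-I_r)F=0$ forcing $FL=I_r$) together with the left-inverse algorithm of Corollary \ref{LeftInverseAlg}, which is precisely the verification you carry out. Your added remarks on termination via Noetherianity and on applying the cancellation to the specific $L=K^TH^T$ are consistent with the paper and introduce nothing divergent.
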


\begin{example}
For this example, we consider the \textit{additive analogue of the Weyl algebra}. Recall that, if  $\Bbbk$
is a field, the $\Bbbk$-algebra $A_n(q_1,\dots,q_n)$ is generated by
$x_1,\dots,x_n,y_1,\dots,y_n$ and subject to the relations:
\begin{center}
$x_jx_i = x_ix_j, y_jy_i = y_iy_j, \ 1 \leq i,j \leq n$,

$y_ix_j=x_jy_i, \ i\neq j$,

$y_ix_i = q_ix_iy_i + 1, \ 1\leq i\leq n$,
\end{center}
where $q_i\in \Bbbk-\{0\}$. It is not difficult to show that
$A_n(q_1, \dots, q_n)$ is a bijective skew $PBW$ extension. We take, $n=2$, $q_1=\frac{1}{2}$
and $q_2=\frac{1}{3}$; on $Mon(A)$ we consider the deglex order and, oven $Mon(A^2)$ the TOPREV order with
$\textbf{\emph{e}}_1> \textbf{\emph{e}}_2$. Let $F$ be the following matrix
\begin{center}
$F=\begin{bmatrix}x_1y_1^2 &x_2y_2^2 \\x_2y_2 &x_1y_1\end{bmatrix}$.
\end{center}
We want to check if columns of $F$ conform a basis for $A^2$, and we know
that this is true if and only if $F^T$ is invertible. Using the above algorithm, we start verifying
if $F^T$ has a left inverse; for this purpose, we compute a Gröber basis of the left $A$-module
generated by the rows of $F^T$, i.e., of the left $A$-module $Im(F)$. Using the Buchberger's algorithm for modules, it is possible to show that $G=\{\boldsymbol{f}_1, \boldsymbol{f}_2, \boldsymbol{f}_3\}$ is a Gröbner
basis for this module, where
$\textbf{\emph{f}}_1=x_1y_1^2\textbf{\emph{e}}_1+x_2y_2\textbf{\emph{e}}_2$,
$\textbf{\emph{f}}_2=x_2y_2^2\textbf{\emph{e}}_1+x_1y_1\textbf{\emph{e}}_2$ and
$\textbf{\emph{f}}_3=-\frac{1}{4}x_1^2y_1^3\textbf{\emph{e}}_2+\frac{1}{9}x_2^2y_2^3\textbf{\emph{e}}_2-
\frac{3}{2}x_1y_1^2\textbf{\emph{e}}_2+ \frac{4}{3}x_2y_2^2\textbf{\emph{e}}_2$.
Applying the division algorithm, we can check that $\textbf{\emph{e}}_1\notin \langle
G \rangle$, therefore  $_A\langle G \rangle\neq A^2$. Thus $F^T$  has no a left inverse and, hence, the
columns of $F$ are not a basis for $A^2$.
\end{example}

\begin{remark}
If $S$ is a left (or right) Noetherian ring, then every epimorphism $\alpha:S^{r}\to S^{r}$ is an
automorphism (see Proposition 1.14 in \cite{Lam1}). This implies that
every left (or right) Noetherian ring is $\mathcal{WF}$ (see \cite{Cohn1}). Therefore, to test if $F\in M_{r}(S)$ is
invertible, it is enough to show that $F$ has a right or a left inverse. So, in the above
algorithm, when $A$ is a bijective $PBW$ extension of a $LGS$ ring, it is not necessary the
computation of $Syz_{S}(F^T)$ to test whether the matrix is invertible, it would be sufficient to
apply the algorithm for the left inverse given in Corollary \ref{LeftInverseAlg}.
\end{remark}

Now we will consider the right inverse of a rectangular matrix. We start with the following
theoretical result.

\begin{proposition}\label{RightInverse}
Let $F$ be a rectangular matrix of size $r\times s$ with entries in the ring $S$. If $F$ has right
inverse, then $s\geq r$ and the module of syzygies of the submodule generated by the rows of $F$ is
zero, i.e., $Syz(F^T)=0$. In other words, if $F$ has a right inverse then the rows of $F$ are
linearly independent.
\begin{proof}
$s\geq r$ since we are assuming that $S$ is $\mathcal{RC}$ (Proposition \ref{2.1.4}).
Let $L\in M_{s\times r}(S)$ such that $FL=I_r$. Consider the homomorphisms $f^{t}$
and $l^t$ as in Proposition \ref{LeftInverse}, then $f^t$ is a monomorphism. Hence, $\ker(f^t)=0$,
i.e., $Syz(F^T)=0$.
\end{proof}
\end{proposition}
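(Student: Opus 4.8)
The plan is to transport the statement to homomorphisms of free modules, paralleling the argument in Proposition~\ref{LeftInverse}. The inequality $s\geq r$ costs nothing: by Proposition~\ref{2.1.4}(i) a matrix over $S$ that possesses a right inverse has at least as many columns as rows, and since $F$ has size $r\times s$ this forces $s\geq r$. (Note that one cannot instead read off $s\geq r$ merely from the monomorphism $f^t$ constructed below, since an injection $S^r\hookrightarrow S^s$ does not by itself bound $r$; the $\mathcal{RC}$ hypothesis, i.e.\ Proposition~\ref{2.1.4}, is exactly what is needed.)

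For the assertion $Syz(F^T)=0$, fix $L\in M_{s\times r}(S)$ with $FL=I_r$ and introduce, as in the proof of Proposition~\ref{LeftInverse}, the $S$-homomorphisms $f^t\colon S^r\to S^s$, $\textbf{\emph{a}}\mapsto(\textbf{\emph{a}}^TF)^T$, and $l^t\colon S^s\to S^r$, $\textbf{\emph{b}}\mapsto(\textbf{\emph{b}}^TL)^T$, whose matrices in the canonical bases are $m(f^t)=F^T$ and $m(l^t)=L^T$. The crucial step is the computation $l^t\circ f^t=i_{S^r}$: for $\textbf{\emph{a}}\in S^r$ we have $l^t(f^t(\textbf{\emph{a}}))=(\textbf{\emph{a}}^TFL)^T=\textbf{\emph{a}}$, or, equivalently, $m(l^t\circ f^t)=(F^TL^T)^T=(FL)^T=I_r$. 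A homomorphism that admits a left inverse is injective, hence $\ker(f^t)=0$. Finally, $\textbf{\emph{a}}\in\ker(f^t)$ means precisely $\textbf{\emph{a}}^TF=0$, i.e.\ $a_1F_{(1)}+\cdots+a_rF_{(r)}=0$ with $F_{(j)}$ the $j$-th row of $F$; thus $\ker(f^t)$ is exactly the module of syzygies of the rows of $F$, namely $Syz(F^T)$, and therefore $Syz(F^T)=0$, i.e.\ the rows of $F$ are linearly independent, as claimed.

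I do not anticipate a genuine obstacle here; the only point requiring care is the transpose bookkeeping imposed by the left-module convention $f(\textbf{\emph{a}})=(\textbf{\emph{a}}^TF^T)^T$ and the associated ``reversed'' rule for the matrix of a composite, together with the elementary observation that a one-sided (left) inverse of a homomorphism already forces injectivity --- one never needs $l^t$ to be a two-sided inverse of $f^t$, which is precisely why a right inverse of $F$ (rather than a two-sided one) suffices for the conclusion.
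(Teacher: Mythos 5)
Your proposal is correct and takes essentially the same route as the paper: $s\geq r$ from Proposition \ref{2.1.4}, then the maps $f^t,l^t$ of Proposition \ref{LeftInverse} with $FL=I_r$ giving $l^t\circ f^t=i_{S^r}$, hence $f^t$ injective and $\ker(f^t)=Syz(F^T)=0$. The only blemish is the intermediate formula $m(l^t\circ f^t)=(F^TL^T)^T$, which under the paper's composition rule should be $\bigl((F^T)^T(L^T)^T\bigr)^T=(FL)^T$ (over a noncommutative ring $(F^TL^T)^T$ need not equal $(FL)^T$), but your direct computation $l^t(f^t(\textbf{\emph{a}}))=(\textbf{\emph{a}}^TFL)^T=\textbf{\emph{a}}$ already establishes the needed identity.
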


\begin{proposition}\label{RightInverse}
Let $F$ be a rectangular matrix of size $r\times s$ with entries in the ring $S$. If $F$ has right
inverse, then $s\geq r$. Moreover, $F$ has a right inverse if and only if $Syz(F^T)=\textbf{0}$ and $Im(F^T)$
is a summand direct of $S^{s}$, where $Im(F^T)$ denotes the
module generated by the columns of $F^T$ i.e., the module generated by the rows
of $F$.
\begin{proof}
To begin, $s\geq r$ since we are assuming that $S$ is $\mathcal{RC}$ (Proposition \ref{2.1.4}).
Now, let $L\in M_{s\times r}(S)$ such that $FL=I_r$. Consider the
homomorphisms $f^{t}$ and $l^t$ as in Proposition \ref{LeftInverse}, then $l^{t}\circ
f^{t}=i_{S^r}$, i.e,  $f^t$ is a split monomorphism. Thus, $S^s=Im(f^{t})\oplus \ker(l^{t})$, and
$Im(f^{t})$ is a direct summand of $S^{s}$. Conversely, let $M$ be a submodule of $S^s$ such that
$S^s=Im(f^t)\oplus M$. So, given $\textbf{\emph{f}}\in S^s$ there exist unique elements
$\textbf{\emph{f}}_1\in Im(f^t)$ and $\textbf{\emph{f}}_2\in M$ such that
$\textbf{\emph{f}}=\textbf{\emph{f}}_1+\textbf{\emph{f}}_2$. Define the homomorphism $l^t:S^s \to
S^r$ as $l^t(\textbf{\emph{f}}):=\textbf{\emph{h}}_{\textbf{\emph{f}}}$, where
$\textbf{\emph{h}}_{\textbf{\emph{f}}}\in S^r$ is such that
$f^t(\textbf{\emph{h}}_{\textbf{\emph{f}}})=\textbf{\emph{f}}_1$. By hypothesis, $Syz(F^{T})=\textbf{0}$, so
$f^{t}$ is injective and we get that $l^t$ is well defined. It is not difficult to show that $l^{t}$ is
a $S$-homomorphism. Consequently, $l^t\circ f^t=i_{S^r}$ and if
$L^T:=m(l^t)$, then $FL=I_r$, i.e., $F$ has a right inverse.
\end{proof}
\end{proposition}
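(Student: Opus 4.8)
The plan is to establish the equivalence via the standard correspondence between matrices and homomorphisms of free left $S$-modules, reducing the matrix statement to a module-theoretic one about the transpose homomorphism $f^t:S^r\to S^s$ defined by $\textbf{\emph{a}}\mapsto(\textbf{\emph{a}}^TF)^T$, whose associated matrix is $F^T$ and whose image is exactly $Im(F^T)$, the submodule of $S^s$ generated by the rows of $F$. First I would recall that $s\geq r$ is forced by the $\mathcal{RC}$ hypothesis together with Proposition \ref{2.1.4}, since a right inverse of $F$ exists. For the forward implication, I would start from $L\in M_{s\times r}(S)$ with $FL=I_r$, transpose to get $L^TF^T=I_r$, and pass to homomorphisms: taking $l^t:S^s\to S^r$ with $m(l^t)=L^T$ as in the proof of Proposition \ref{LeftInverse}, the composition rule $m(l^t\circ f^t)=(m(f^t)^Tm(l^t)^T)^T=(FL)^T=I_r$ shows $l^t\circ f^t=i_{S^r}$. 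Hence $f^t$ is a split monomorphism, so $\ker f^t=0$, i.e.\ $Syz(F^T)=\textbf{0}$, and $S^s=Im(f^t)\oplus\ker(l^t)$, which exhibits $Im(F^T)$ as a direct summand of $S^s$.

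For the converse, I would assume $Syz(F^T)=\textbf{0}$ and fix a submodule $M\subseteq S^s$ with $S^s=Im(f^t)\oplus M$. The idea is to build the retraction by hand: every $\textbf{\emph{f}}\in S^s$ decomposes uniquely as $\textbf{\emph{f}}=\textbf{\emph{f}}_1+\textbf{\emph{f}}_2$ with $\textbf{\emph{f}}_1\in Im(f^t)$, $\textbf{\emph{f}}_2\in M$, and since $f^t$ is injective there is a unique preimage $\textbf{\emph{h}}_{\textbf{\emph{f}}}$ with $f^t(\textbf{\emph{h}}_{\textbf{\emph{f}}})=\textbf{\emph{f}}_1$; I set $l^t(\textbf{\emph{f}}):=\textbf{\emph{h}}_{\textbf{\emph{f}}}$. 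Checking that $l^t$ is $S$-linear is routine (it uses that both the projection onto $Im(f^t)$ along $M$ and the inverse of $f^t$ on its image are $S$-linear, the latter because $f^t$ is an injective homomorphism), and by construction $l^t\circ f^t=i_{S^r}$. Translating back, if $L^T:=m(l^t)$ then $m(l^t\circ f^t)=(FL)^T=I_r$, so $FL=I_r$ and $L$ is the desired right inverse.

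The main obstacle—though a mild one—is the converse direction: one must be careful that the injectivity hypothesis $Syz(F^T)=\textbf{0}$ is genuinely needed (it is what makes $\textbf{\emph{h}}_{\textbf{\emph{f}}}$ well defined and hence $l^t$ a function at all), and that no hypothesis on $S$ beyond $\mathcal{RC}$ is silently used when verifying $S$-linearity of $l^t$. Everything else is a bookkeeping exercise with the transpose dictionary $m(gf)=(m(f)^Tm(g)^T)^T$ recalled in the preliminaries, so I would keep those verifications brief and refer to the proof of Proposition \ref{LeftInverse} for the construction of $f^t$ and $l^t$.
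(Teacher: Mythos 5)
Your proposal is correct and follows essentially the same route as the paper's own proof: reduce to the homomorphism $f^t$ with matrix $F^T$, obtain the splitting $l^t\circ f^t=i_{S^r}$ from a right inverse, and conversely build the retraction $l^t$ from the decomposition $S^s=Im(f^t)\oplus M$ using the injectivity supplied by $Syz(F^T)=\textbf{0}$. Your explicit remark that injectivity is exactly what makes $\textbf{\emph{h}}_{\textbf{\emph{f}}}$ well defined matches the paper's argument, so there is nothing to add.
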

\begin{remark}
If we had a computational tool for to check if a submodule of a free module is a summand direct,
then Proposition \ref{RightInverse} would establish an algorithm to check the existence of a right inverse.
\end{remark}

Following \cite{Chyzak} and \cite{Quadrat}, consider a matrix $F:=[f_{ij}]\in M_{r\times s}(A)$,
with $s\geq r$, where $A$ is a bijective
skew $PBW$ extension endowed with an involution $\theta$, i.e., a function $\theta:S\to S$ such
that $\theta(a+b)=\theta(a)+\theta(b), \theta(ab)=\theta(b)\theta(a)$ and $\theta^2=i_S$, for all
$a,b\in S$. Note that $\theta(1)=1$, and hence, $\theta$ is an anti-isomorphism of $S$. We define
$\theta(F):=[\theta(f_{ij})]$. Observe that if $K\in M_{s\times r}(A)$, then
\begin{equation}\label{equ14.1.2}
\theta(FK)^T=\theta(K)^T\theta(F)^T.
\end{equation}
\begin{proposition}
Let $A$ be a bijective skew $PBW$ extension endowed with an involution $\theta$ and let
$F:=[f_{ij}]\in M_{r\times s}(A)$, with $s\geq r$. Then, $F$ has a right inverse if and only if for
each $1\leq j\leq r$, $\textbf{e}_j\xrightarrow{G'}_+ \textbf{\emph{0}}$, where $G'$ is a Gröbner
basis of the left $A$-module generated by the columns of $\theta(F)$ and $\{\textbf{e}_j\}_{j=1}^r$
is the canonical basis of $A^r$.
\end{proposition}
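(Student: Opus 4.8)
The plan is to reduce the statement to the left-inverse algorithm already established in Corollary \ref{LeftInverseAlg} by transporting everything through the involution $\theta$. First I would observe that $F$ has a right inverse $L\in M_{s\times r}(A)$, i.e. $FL=I_r$, if and only if $\theta(FL)^T=\theta(I_r)^T=I_r$; by the identity (\ref{equ14.1.2}) this reads $\theta(L)^T\theta(F)^T=I_r$, which says exactly that $\theta(L)^T$ is a left inverse of the matrix $\theta(F)^T$. Since $\theta$ is an anti-isomorphism with $\theta^2=i_S$, the map $L\mapsto \theta(L)^T$ is a bijection of $M_{s\times r}(A)$ onto $M_{r\times s}(A)$ (more precisely it sends $M_{s\times r}$ to $M_{r\times s}$), so $F$ has a right inverse if and only if $\theta(F)^T$ has a left inverse. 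Here I am using that $\theta(F)^T$ is an $r\times s$ matrix, so the hypothesis $s\geq r$ is exactly what is needed for a left inverse to possibly exist.

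Next I would apply Proposition \ref{LeftInverse} to the matrix $\theta(F)^T\in M_{r\times s}(A)$: it has a left inverse if and only if the left $A$-submodule of $A^r$ generated by the \emph{rows} of $\theta(F)^T$ equals $A^r$. But the rows of $\theta(F)^T$ are precisely the columns of $\theta(F)$, so the condition becomes: the left $A$-module generated by the columns of $\theta(F)$ is all of $A^r$. Finally, invoking the theory of Gr\"obner bases for bijective skew $PBW$ extensions (the division algorithm and the membership test, as recalled in the algorithm of Corollary \ref{LeftInverseAlg}), a left submodule $\langle\,_AG'\rangle$ of $A^r$ equals $A^r$ if and only if every canonical basis vector $\textbf{e}_j$, $1\leq j\leq r$, lies in it, which in turn holds if and only if $\textbf{e}_j\xrightarrow{G'}_+ \textbf{0}$ for each $j$, where $G'$ is a Gr\"obner basis of that column module. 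Chaining these equivalences gives the statement.

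The only genuinely delicate point is bookkeeping with the transpose-and-involution operation: one must check that (\ref{equ14.1.2}) is being applied in the right direction and that $\theta(-)^T$ is its own inverse up to the obvious identifications, i.e. $\theta(\theta(M)^T)^T=M$, which follows from $\theta^2=i_S$ and $(M^T)^T=M$. I would state this as a one-line observation before the main chain of equivalences. Everything else is a direct appeal to results already in the excerpt: Proposition \ref{2.1.4} for the numerical constraint $s\geq r$, Proposition \ref{LeftInverse} for the row-module characterization of left invertibility, and the standard Gr\"obner-basis membership criterion underlying Corollary \ref{LeftInverseAlg} for the reduction-to-zero reformulation. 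No step requires a substantive new idea; the content is simply the functoriality of the involution combined with the already-established left-inverse algorithm.
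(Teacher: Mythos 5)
Your proposal is correct and is essentially the paper's own argument: the paper applies $\theta$ to the columnwise expansion of $FG=I_r$, which is exactly your matrix-level use of identity (\ref{equ14.1.2}) combined with Proposition \ref{LeftInverse}, and both routes land on the same criterion that the canonical vectors of $A^r$ lie in the left module generated by the columns of $\theta(F)$, tested by Gr\"obner reduction to zero. One small bookkeeping slip: $\theta(F)^T$ lies in $M_{s\times r}(A)$, not $M_{r\times s}(A)$, and it is precisely because its rows are elements of $A^r$ that Proposition \ref{LeftInverse} gives the condition ``rows of $\theta(F)^T$ generate $A^r$'' and that $s\geq r$ is the right size constraint; this does not affect the validity of your argument.
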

\begin{proof}
$G:=[g_{ij}]\in M_{s\times r}(A)$ is a right inverse of $F$ if and only if $FG=I_r$, and this is
equivalent to say that
\begin{center}
$\textbf{\emph{e}}_j=\begin{bmatrix}f_{11}\\ f_{21}\\ \vdots
\\ f_{r1}\end{bmatrix}\cdot g_{1j}+\cdots +\begin{bmatrix}f_{1s}\\ f_{2s}\\ \vdots
\\ f_{rs}\end{bmatrix}\cdot g_{sj}$, $1\leq j\leq r$;
\end{center}
applying $\theta$ we obtain
\begin{center}
$\textbf{\emph{e}}_j=\theta(g_{1j})\cdot \begin{bmatrix}\theta(f_{11})\\ \theta(f_{21})\\
\vdots
\\ \theta(f_{r1})\end{bmatrix}+\cdots +\theta(g_{sj})\cdot \begin{bmatrix}\theta(f_{1s})\\ \theta(f_{2s})\\ \vdots
\\ \theta(f_{rs})\end{bmatrix}$.
\end{center}
Thus, $G$ is a right inverse of $F$ if and only if the canonical vectors of $A^r$ belong to the
left $A$-module generated by the columns of $\theta(F)$, i.e.,
$\textbf{\emph{e}}_1,\dots,\textbf{\emph{e}}_r\in \langle \theta(F)\rangle$. Let $G'$ be a Gröbner
basis of $\langle \theta(F)\rangle$, then  $G$ is a
right inverse of $F$ if and only if for each $j$, $\textbf{\emph{e}}_j\xrightarrow{G'}_+
\textbf{0}$.
\end{proof}

\begin{corollary}
Let $A$ be a bijective skew $PBW$ extension and $F\in M_{r\times s}(A)$ be a rectangular matrix
over $A$. The algorithm below determines if $F$ is right invertible, and in the positive case, it
computes the right inverse of $F$:
\end{corollary}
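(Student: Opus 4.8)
The plan is to mirror the structure of the left-inverse algorithm (Corollary \ref{LeftInverseAlg}), but carried out on the auxiliary matrix $\theta(F)$ instead of $F$ itself, using the preceding Proposition as the correctness criterion. The algorithm I would write takes as input $F\in M_{r\times s}(A)$ and outputs a matrix $G\in M_{s\times r}(A)$ with $FG=I_r$ when one exists, and $0$ otherwise. First I would test the trivial obstruction: if $s<r$ then, since $A$ is $\mathcal{RC}$ (Proposition \ref{2.1.4}), no right inverse can exist, so the algorithm returns $0$. Otherwise one forms $\theta(F)\in M_{s\times r}(A)$, computes a Gr\"obner basis $G'=\{\textbf{\emph{h}}_1,\dots,\textbf{\emph{h}}_t\}$ of the left $A$-module generated by the columns of $\theta(F)$ (this is exactly $_A\langle \theta(F)\rangle\subseteq A^s$ in the column convention, i.e.\ the left module generated by the rows of $\theta(F)^T$), and uses the division algorithm to reduce each canonical vector $\textbf{\emph{e}}_j$ of $A^r$ modulo $G'$.

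The correctness of the decision step is immediate from the previous Proposition: $F$ has a right inverse if and only if $\textbf{\emph{e}}_j\xrightarrow{G'}_+\textbf{\emph{0}}$ for every $1\le j\le r$, so as soon as some $\textbf{\emph{e}}_j$ fails to reduce to zero the algorithm returns $0$. The remaining work is to actually produce the matrix $G$ in the affirmative case, and here I would track coefficients exactly as in Corollary \ref{LeftInverseAlg}. During Buchberger's algorithm one obtains a transition matrix expressing the Gr\"obner basis in terms of the original generators: a matrix $P$ with $(G')^T = P^T\,\theta(F)^T$ (columns of $\theta(F)$ combine to give the $\textbf{\emph{h}}_i$). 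From the successful divisions one records coefficients $k_{ij}$ with $\textbf{\emph{e}}_j=\sum_{i=1}^t k_{ij}\textbf{\emph{h}}_i$, i.e.\ a matrix $K=[k_{ij}]\in M_{t\times r}(A)$ so that $I_r = K^T (G')^T = K^T P^T\,\theta(F)^T$. Transposing and applying $\theta$ through \eqref{equ14.1.2} converts this identity on $\theta(F)$ back into an identity $F\,G=I_r$; concretely one sets $G:=\theta\big((K^TP^T)^T\big)=\theta(P K)$, or equivalently reads off $G$ from $\theta$ applied to the appropriate product of $P$ and $K$, and returns it.

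The two ingredients that make all of this effective are already available: the Gr\"obner basis and the division algorithm for left submodules of free modules over a bijective skew $PBW$ extension exist and terminate (Remark after Theorem \ref{618}), and termination of the whole procedure is underwritten by the Noetherianity of $A$ (Proposition \ref{1.3.4}). I expect the only real subtlety — the ``main obstacle'' — to be bookkeeping with the involution and the transposes: one must be scrupulous about the left-module/column conventions set up in the excerpt (recall $f(\textbf{\emph{a}})=(\textbf{\emph{a}}^TF^T)^T$ and $m(gf)=(F^TG^T)^T$), and about the order-reversal caused by $\theta$ via \eqref{equ14.1.2}, so that the matrix actually returned satisfies $FG=I_r$ on the correct side rather than $GF$ or a transposed identity. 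Once the conventions are pinned down, verifying $FG=I_r$ is a direct substitution using $\theta^2=i_S$ and \eqref{equ14.1.2}, with no further computation required, so the proof reduces to: (1) invoke the previous Proposition for the decision; (2) exhibit $G$ from the Buchberger transition data and the division coefficients; (3) check $FG=I_r$ by applying $\theta$ to the recorded identity over $\theta(F)$.
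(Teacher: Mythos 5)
Your proposal follows the paper's own route exactly: the decision step is the preceding Proposition (reduce each canonical vector $\textbf{\emph{e}}_j$ of $A^r$ by a Gr\"obner basis $G'$ of the left module generated by the columns of $\theta(F)$), and the right inverse is assembled from the transition matrix $J$ (your $P$) with $G'^T=J^T\theta(F)^T$ together with the division coefficients $K$ satisfying $I_r=K^TG'^T$, the conversion back to an identity $FH=I_r$ being done through $\theta$ and (\ref{equ14.1.2}); this is precisely the paper's proof, whose returned matrix is $H:=\theta(J)\theta(K)$.

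One slip should be corrected, and it is exactly the bookkeeping issue you yourself flag: the identification $\theta\bigl((K^TP^T)^T\bigr)=\theta(PK)$ is false over a noncommutative ring, because $(K^TP^T)^T\neq PK$ (transposition does not reverse products there) and $\theta(PK)\neq\theta(P)\theta(K)$ (entrywise $\theta$ of a product reverses the factors, since $\theta$ is an anti-automorphism). Your first expression is the correct one: entrywise, $\theta\bigl((K^TP^T)^T\bigr)=\theta(P)\theta(K)$, which is the paper's output and does satisfy $F\,\theta(P)\theta(K)=I_r$, via $I_r=K^TP^T\theta(F)^T=\theta(\theta(P)\theta(K))^T\theta(F)^T=\theta\bigl(F\theta(P)\theta(K)\bigr)^T$ and $\theta^2=i_S$; returning $\theta(PK)$ computed literally (first the product $PK$, then entrywise $\theta$) would in general fail to be a right inverse.
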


\begin{center}
\fbox{\parbox[c]{12cm}{
\begin{center}
{\rm \textbf{Algorithm 1 for the right inverse of a matrix}}
\end{center}
\begin{description}
\item[]{\rm \textbf{INPUT}:} An involution $\theta$ of $A$; a rectangular matrix $F\in M_{r\times s}(A)$
\item[]{\rm \textbf{OUTPUT}:} A matrix $H\in M_{s\times r}(A)$ satisfying $FH=I_r$ if it exists, and $0$ in other case
\item[]{\rm \textbf{INITIALIZATION}:}

{\rm \textbf{IF}} $s<r$

\quad {\rm \textbf{RETURN}} $0$

\quad \quad  {\rm \textbf{IF}} $s\geq r$, let
$G':=\{\textbf{\emph{g}}_1,\ldots,\textbf{\emph{g}}_t\}$ be a Gröbner basis for the left submodule
generated by columns of $\theta(F)$ and let $\{\textbf{\emph{e}}_j\}_{j=1}^r$ be the canonical basis of
$A^r$. Use the division algorithm to verify if $\textbf{\emph{e}}_{j}\in \langle G'\rangle$ for
each $1\leq j \leq r$.

{\rm \textbf{IF}} there exists some $\textbf{\emph{e}}_{j}$ such that $\textbf{\emph{e}}_j\notin
\langle G' \rangle$,

\quad {\rm \textbf{RETURN}} $0$

{\rm \textbf{IF}} $\langle G'\rangle=A^r$, let $J\in M_{s\times t}(A)$ with the property
$G'^T=J^T\theta(F)^T$, and consider $K:=[k_{ij}]\in M_{t\times r}$, where the $k_{ij}$'s are such
that $\textbf{\emph{e}}_{j}=k_{1j}\textbf{\emph{g}}_{1}+k_{2j}\textbf{\emph{g}}_2+\cdots+
k_{tj}\textbf{\emph{g}}_{t}$ for $1\leq j\leq r$. Thus, $I_r=K^TG'^T$

\quad {\rm \textbf{RETURN}} $H:= \theta(J)\theta(K)$

\end{description}}}
\end{center}
\begin{proof}
Applying (\ref{equ14.1.2}) we get
\begin{center}
$I_r=K^TG'^T=K^TJ^T\theta(F)^T=\theta(\theta(K))^T\theta(\theta(J))^T\theta(F)^T=\theta(\theta(J)\theta(K))^T\theta(F)^T=
\theta(F\theta(J)\theta(K))^T$,
\end{center}
so $I_r=\theta(F\theta(J)\theta(K))=\theta(I_r)$, and from this we get $I_r=F\theta(J)\theta(K)$.
\end{proof}

\begin{example}
Let us consider the ring $A=\sigma(\mathbb{Q})\langle x,y\rangle$, with $yx=-xy+1$. Using the above
algorithm, we will compute a right inverse for
\begin{center}
$F=\begin{bmatrix}x&0&1\\y-1&x-1&x-y\end{bmatrix}$
\end{center}
provided that it exists. For this, we consider the involution $\theta$ on $A$ given by
$\theta(x)=-x$ and $\theta(y)=-y$. With this involution, we have that $\theta(xy)=-xy+1$.
Thus,
\begin{center}
$\theta(F)=\begin{bmatrix}-x&0&1\\-y-1&-x-1&-x+y\end{bmatrix}$
\end{center}
We start computing a Gröbner basis for the left module generated by the columns of
$\theta(F)$. We get that $G'=\{\textbf{\emph{e}}_1,\textbf{\emph{e}}_2\}$ is a
Gröbner basis for $_A\langle \theta(F)\rangle$. In this case, $F$  has a right inverse and
\begin{center}
$J=\begin{bmatrix}-x+y&-1\\ x^2+2xy-y^2-x+y-1 & x+y-1 \\ -x^2-xy+2 & -x\end{bmatrix}$ is such that $G'^T=J^T\theta(F)^T$.
\end{center}
Since $G'^T=I_2$, then $K=I_2$ and $L:=\theta(J)$ is a right inverse for $F$, where
\begin{center}
$\theta(J)=\begin{bmatrix}x-y&-1\\ x^2-2xy-y^2+x-y+1 & -x-y-1 \\ -x^2+xy+1 & x\end{bmatrix}$.
\end{center}
\end{example}

To find involutions of skew $PBW$ extensions it is a difficult task, so the above algorithm is not
practical. A second algorithm for testing the existence and computing a right inverse of a matrix
uses the theory of Gröbner bases for right modules developed in \cite{Gallego6}. For this
we will made a simple adaptation of Proposition \ref{LeftInverse} and Corollary
\ref{LeftInverseAlg} for right submodules, using the right notation.

\begin{proposition}
Let $F$ be a rectangular matrix of size $r\times s$ with entries in a ring $S$. If $F$ has right
inverse, then $s\geq r$. Moreover, $F$ has a right inverse if and only if the right module generated by
the columns of $F$ coincides with $S^r$.
\end{proposition}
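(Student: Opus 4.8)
The plan is to imitate the proof of Proposition \ref{LeftInverse}, working this time with \emph{right} $S$-modules, so that the matrix of a composite homomorphism is the ordinary product $m(gf)=m(g)m(f)$ rather than the transposed product used in the left-module case. The inequality $s\geq r$ is immediate: if $F\in M_{r\times s}(S)$ has a right inverse then, by Proposition \ref{2.1.4}(i) (applied with the indices in the roles $s\leftrightarrow r$), we conclude $s\geq r$; this was already recorded above.

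For the equivalence, first suppose $F$ admits a right inverse $L\in M_{s\times r}(S)$, so $FL=I_r$. Define the right $S$-homomorphisms $f:S^s\to S^r$, $\textbf{\emph{a}}\mapsto F\textbf{\emph{a}}$, and $l:S^r\to S^s$, $\textbf{\emph{b}}\mapsto L\textbf{\emph{b}}$, so that $m(f)=F$ and $m(l)=L$. Then $m(f\circ l)=m(f)m(l)=FL=I_r$, hence $f\circ l=i_{S^r}$ and $f$ is an epimorphism. Since $f(\textbf{\emph{e}}_j)$ is the $j$-th column of $F$ for $1\leq j\leq s$, the image of $f$ is exactly the right submodule of $S^r$ generated by the columns of $F$; therefore that submodule is all of $S^r$.

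Conversely, assume the columns of $F$ generate $S^r$ as a right module. Then the homomorphism $f$ above is surjective, so for each canonical vector $\textbf{\emph{e}}_i\in S^r$, $1\leq i\leq r$, there is $\textbf{\emph{a}}_i\in S^s$ with $F\textbf{\emph{a}}_i=\textbf{\emph{e}}_i$. Letting $L\in M_{s\times r}(S)$ be the matrix whose $i$-th column is $\textbf{\emph{a}}_i$, we get $FL=[\,F\textbf{\emph{a}}_1\ \cdots\ F\textbf{\emph{a}}_r\,]=[\,\textbf{\emph{e}}_1\ \cdots\ \textbf{\emph{e}}_r\,]=I_r$, so $L$ is a right inverse of $F$.

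The whole argument is routine; the one place demanding attention is the consistent use of the right-module conventions --- in particular the formula $m(gf)=m(g)m(f)$ and the identification of $\mathrm{Im}(f)$ with the span of the \emph{columns} (not the rows) of the defining matrix --- since the ambient development of the paper is written for left modules. As in the left-module case, this proposition is precisely what converts the existence of a right inverse into a Gr\"obner-basis membership question for right submodules, yielding the right-handed analogue of Corollary \ref{LeftInverseAlg}.
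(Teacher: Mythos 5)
Your proof is correct and follows essentially the same route as the paper's: deduce $s\geq r$ from Proposition \ref{2.1.4}, then use the right-module homomorphisms $f(\textbf{\emph{a}})=F\textbf{\emph{a}}$ and $l(\textbf{\emph{b}})=L\textbf{\emph{b}}$ with $m(f\circ l)=FL=I_r$ for one direction, and assemble the preimages of the canonical vectors of $S^r$ into a right inverse for the other. If anything, your version is slightly cleaner in the converse, where you correctly index over the $r$ canonical vectors of $S^r$ (the paper's text has a minor slip there).
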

\begin{proof}
The first statement follows from Proposition \ref{2.1.4}. Now, suppose that
$F$ has a right inverse and let $L$ be a matrix such that $FL=I_r$. Define the following
homomorphism of right free $S$-modules:
\begin{equation*}
\begin{aligned}
f: S^s &\to S^r   \\
\textbf{\emph{a}} &\mapsto F\textbf{\emph{a}}
\end{aligned}
\qquad \qquad
\begin{aligned}
l:S^r &\to S^s\\
\textbf{\emph{b}}&\mapsto L\textbf{\emph{b}},
\end{aligned}
\end{equation*}
then $m(f)=F$ and $m(l)=L$. Whence, $m(f\circ l)=FL=I_r$, i.e, $f$ is an epimorphism.
Therefore, $Im(f)=S^r$, i.e., the right
submodule generates by columns of $F$ coincides with the free module $S^r$. Conversely,  if
$Im(F)=S^r$, then for $f$ defined as above, there exist $\textbf{\emph{a}}_{1}\ldots,
\textbf{\emph{a}}_s\in S^s$ such that $f(\textbf{\emph{a}}_i)=\textbf{\emph{e}}_{i}$ for each
$1\leq i\leq s$, and where $\textbf{\emph{e}}_{1},\ldots, \textbf{\emph{e}}_s$ denote the canonical
vectors of $S^s$. Thus, if $\textbf{\emph{a}}_{j}=\begin{bmatrix} a_{1j}& a_{2j} &\cdots& a_{rj}
\end{bmatrix}^T$, we have
\[F\textbf{\emph{a}}_{j}=F\begin{bmatrix} a_{1j}& a_{2j} &\cdots& a_{rj} \end{bmatrix}=F^{(1)}a_{1j}+\cdots+ F^{(r)}a_{rj}=\textbf{\emph{e}}_j,\]
where $F^{(j)}$ denotes the $j$-th column of $F$, $1\leq j\leq r$. So, if $L$ is the matrix
whose columns are the vectors $\textbf{\emph{a}}_{j}^T$, then $FL=I_r$, i.e., $F$ has a right inverse.
\end{proof}
Thus, considering the results about Gröbner bases for right modules (see \cite{Gallego6}), we have the following alternative
algorithm for testing  the existence of a right inverse.
\begin{corollary}\label{RightInverseAlg}
Let $A$ be a bijective skew $PBW$ extension and $F\in M_{r\times s}(A)$ be a rectangular matrix
over $A$. The algorithm below determines if $F$ is right invertible, and in the positive case, it
computes a right inverse of $F$:
\end{corollary}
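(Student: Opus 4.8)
The statement to be proved is the right-side analogue of Corollary~\ref{LeftInverseAlg}, so the plan is to mirror that proof, replacing left modules by right modules and adjusting the matrix conventions accordingly. Recall from the preliminaries that for right modules the matrix of a homomorphism $f:S^s\to S^r$ acts by $f(\textbf{\emph{a}})=F\textbf{\emph{a}}$ and composition corresponds to ordinary matrix multiplication $m(gf)=m(g)m(f)$; this is exactly what makes the right-hand bookkeeping cleaner than the transposed version appearing in the left-inverse algorithm. First I would invoke the preceding Proposition (the right-module adaptation of Proposition~\ref{LeftInverse}): $F$ has a right inverse if and only if $s\geq r$ and the right $A$-submodule of $S^r$ generated by the columns of $F$ equals $S^r$. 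This reduces correctness of the algorithm to two tasks: deciding membership $\textbf{\emph{e}}_j\in\langle F^{(1)},\dots,F^{(s)}\rangle_r$ for each canonical vector $\textbf{\emph{e}}_j$ of $A^r$, and, when all memberships hold, extracting an explicit right inverse.

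The second step is to spell out how Gröbner bases theory for right modules over a bijective skew $PBW$ extension (developed in \cite{Gallego6}) supplies both ingredients. Compute a Gröbner basis $G=\{\textbf{\emph{g}}_1,\dots,\textbf{\emph{g}}_t\}$ of the right submodule generated by the columns of $F$; simultaneously Buchberger's algorithm records a matrix $H\in M_{s\times t}(A)$ with $FH=[\textbf{\emph{g}}_1\,\cdots\,\textbf{\emph{g}}_t]$ (each column of $G$ written as a right-combination of the columns of $F$). Then run the division algorithm on each $\textbf{\emph{e}}_j$: if some $\textbf{\emph{e}}_j$ does not reduce to $\textbf{\emph{0}}$ modulo $G$, then $\langle G\rangle_r\neq A^r$ and by the Proposition $F$ has no right inverse, so the algorithm correctly returns $0$. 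Otherwise the division algorithm yields coefficients so that $\textbf{\emph{e}}_j=\textbf{\emph{g}}_1 k_{1j}+\cdots+\textbf{\emph{g}}_t k_{tj}$; assembling these into $K=[k_{ij}]\in M_{t\times r}(A)$ gives $[\textbf{\emph{g}}_1\,\cdots\,\textbf{\emph{g}}_t]\,K=I_r$, i.e. $GK=I_r$ with $G$ now denoting the matrix whose columns are the $\textbf{\emph{g}}_i$. Combining, $F(HK)=(FH)K=GK=I_r$, so $L:=HK\in M_{s\times r}(A)$ is the desired right inverse, which the algorithm returns.

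The third step is termination. Here I would appeal to the Hilbert Basis Theorem for bijective skew $PBW$ extensions (Proposition~\ref{1.3.4}): since $A$ is Noetherian (the base ring being assumed Noetherian, as is implicit whenever Gröbner bases are invoked), Buchberger's algorithm for right modules halts after finitely many steps, and the division algorithm halts because the monomial order is a well-order on $\mathrm{Mon}(A^r)$. The initial test $s<r$ disposes of the trivial case via Proposition~\ref{2.1.4}(i) (equivalently the first assertion of the preceding Proposition), since $S$ is assumed $\mathcal{RC}$.

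The main obstacle is essentially notational rather than conceptual: one must keep the right-module conventions consistent throughout — columns rather than rows, $m(gf)=m(g)m(f)$ rather than the transposed rule, and right-coefficients in the division algorithm on the correct side — so that the identity $F(HK)=I_r$ comes out without a stray transpose or a reversed product. Provided the right-module Gröbner machinery of \cite{Gallego6} is cited as producing exactly the matrices $H$ and $K$ described, the verification $F(HK)=(FH)K=GK=I_r$ is a one-line computation, and there is no genuine difficulty beyond that bookkeeping.
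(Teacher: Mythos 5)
Your proposal is correct and follows essentially the same route as the paper: the right-module analogue of Proposition \ref{LeftInverse} reduces right invertibility to the condition that the columns of $F$ generate $A^r$ as a right module, and the right Gr\"obner basis machinery supplies $H$ with $G=FH$ and $K$ with $GK=I_r$, whence $L:=HK$ satisfies $FL=I_r$, exactly as in the paper's algorithm (you even fix its small typo by taking $K\in M_{t\times r}(A)$). Nothing further is needed.
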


\begin{center}
\fbox{\parbox[c]{12cm}{
\begin{center}
{\rm \textbf{Algorithm 2 for the right inverse of a matrix}}
\end{center}
\begin{description}
\item[]{\rm \textbf{INPUT}:} A rectangular matrix $F\in M_{r\times s}(A)$
\item[]{\rm \textbf{OUTPUT}:} A matrix $L\in M_{s\times r}(A)$ satisfying $FL=I_r$ if it exists, and $0$ in other case
\item[]{\rm \textbf{INITIALIZATION}:}

{\rm \textbf{IF}} $s< r$

\quad {\rm \textbf{RETURN}} $0$

\quad \quad  {\rm \textbf{IF}} $s\geq r$, let
$G:=\{\textbf{\emph{g}}_1,\ldots,\textbf{\emph{g}}_t\}$ be a right Gröbner basis for the right submodule
generated by columns of $F$ and let $\{\textbf{\emph{e}}_j\}_{j=1}^r$ be the canonical basis of $A_{A}^r$. Use
right version of division algorithm to verify if $\textbf{\emph{e}}_{i}\in \langle G\rangle_A$ for each $1\leq i
\leq r$.

{\rm \textbf{IF}} there exists some $\textbf{\emph{e}}_{j}$ such that $\textbf{\emph{e}}_j\notin
\langle G \rangle_A$,

\quad {\rm \textbf{RETURN}} $0$

{\rm \textbf{IF}} $\langle G\rangle_A=A^r$, let $H\in M_{s\times t}(A)$ with the property $G=FH$,
and consider $K:=[k_{ij}]\in M_{t\times s}$, where the $k_{ij}$'s are such that
$\textbf{\emph{e}}_{j}=\textbf{\emph{g}}_{1}k_{1j}+\textbf{\emph{g}}_2k_{2j}+\cdots+
\textbf{\emph{g}}_{t}k_{tj}$ for $1\leq i\leq r$. Thus, $I_r=GK$

\quad {\rm \textbf{RETURN}} $L:= HK$

\end{description}}}
\end{center}

\begin{example}\label{rightinverse2}
Consider the ring $A=\sigma(\mathbb{Q})\langle x,y\rangle$, with $yx=-xy+1$, and let $F$ be the
matrix given by
\begin{center}
$F=\begin{bmatrix}y^2&-xy&y\\xy-1&x^2&x\end{bmatrix}$.
\end{center}
Applying the right versions of Buchberger's algorithm, we have
that a Gröbner basis for the right module generated by the columns of $F$ is
$G=\{\textbf{\emph{e}}_1,\textbf{\emph{e}}_2\}$. From Corollary \ref{RightInverseAlg} we can show
that $F$ has a right inverse; moreover, one right inverse for $F$ is given by
\begin{center}
$L=\begin{bmatrix}0&-1\\-1 &0 \\ x& y \end{bmatrix}$.
\end{center}
\end{example}

\section{Computing the projective dimension}\label{10.10}
\noindent
Given $M$ an $S$-module and
\begin{equation}\label{projectiveresolution}
\cdots \xrightarrow{f_{r+1}} P_r\xrightarrow{f_r}
P_{r-1}\xrightarrow{f_{r-1}} \cdots \xrightarrow{f_2}
P_1\xrightarrow{f_1} P_0\xrightarrow{f_0} M\longrightarrow 0
\end{equation}
a projective resolution of $M$, it is not difficult to show that if $r$ is the smallest integer
such $Im(f_r)$ is projective, then $r$ does not depend on the resolution and $pd(M)=r$ (c.f.
\cite{Gallego6}, Theorem 2.4.2). Therefore, we can consider a free resolution $\{f_i\}_{i\geq 0}$,
which we can calculate using the some of the applications of Gröbner bases theory. Hence, by
Theorem \ref{6.3.3} we obtain the following algorithm which computes the projective dimension of a
module $M\subseteq A^m$ given by a finite set of generators, where $A$ is a bijective skew $PBW$
extension of a $LGS$  ring $R$ (\textit{left Gröbner soluble}, see \cite{Gallego6} and
\cite{Gallego5}) with finite left global dimension. Note that $A$ is left Noetherian (Theorem
\ref{1.3.4}) and ${\rm lgld}(A)<\infty$ (see \cite{lezamareyes1}).
\begin{center}
\fbox{\parbox[c]{12cm}{
\begin{center}
{\rm \textbf{Projective dimension of a module\\ over
a bijective skew $PBW$ extension\\
Algorithm 1}}
\end{center}
\begin{description}
\item[]{\rm \textbf{INPUT}:} $\text{\rm lgld}(A)<\infty, M=\langle \textbf{\emph{f}}_1,\dots ,\textbf{\emph{f}}_s\rangle\subseteq
A^m$, with $\textbf{\emph{f}}_k\neq 0, \\ 1\leq k\leq s$
\item[]{\rm \textbf{OUTPUT}:} $\text{\rm pd}(M)$

\smallskip
\item[]{\rm \textbf{INITIALIZATION}:} Compute a free resolution $\{f_i\}_{i\geq 0}$ of $M$

$i:=0$
\item[]{\rm \textbf{WHILE}} $i\leq \text{\rm lgld}(A)$ {\rm \textbf{DO}}

{\rm \textbf{IF}} $Im(f_i)$ is projective {\rm \textbf{THEN}} $\text{\rm pd}(M)=i$

\item[]\begin{quote}{\rm \textbf{ELSE}} $i=i+1$
\end{quote}

\end{description}}}
\end{center}
Observe that, in the previous algorithm, we no need to compute finite free resolutions of $M$; any
free resolution computed using syzygies is enough.\\
\\
Next, we present  another algorithm for computing the left projective dimension of a module
$M\subseteq A^m$ given by a finite free resolution:
\begin{equation}\label{equ12.12.1}
0\to A^{s_m}\xrightarrow{f_{m}} A^{s_{m-1}}\xrightarrow{f_{m-1}} A^{s_{m-2}}\xrightarrow{f_{m-2}}
\cdots \xrightarrow{f_2} A^{s_1}\xrightarrow{f_1} A^{s_0}\xrightarrow{f_0} M\longrightarrow 0.
\end{equation}
This algorithm is supported by Corollary \ref{6.3.4}.
\begin{center}
\fbox{\parbox[c]{13cm}{
\begin{center}
{\rm \textbf{Projective dimension of a module\\ over a bijective skew $PBW$ extension\\ Algorithm
2}}
\end{center}
\begin{description}
\item[]{\rm \textbf{INPUT}:} An $A$-module $M$ defined by a finite
free resolution (\ref{equ12.12.1})
\item[]{\rm \textbf{OUTPUT}:} $\text{\rm pd}(M)$

\smallskip
\item[]{\rm \textbf{INITIALIZATION}:} Set $j:=m$ and $H_j:=F_m$,
with $F_m$ the matrix of $f_m$ in the canonical bases

\item[]{\rm \textbf{WHILE}} $j\leq m$ {\rm \textbf{DO}}

\textit{Step} 1.  Check whether or not $H_j^T$ admits a right inverse $G_j^T$:
\begin{quote}

(a) If no right inverse of $H_j^T$ exists, then $\text{\rm pd}(M)=j$

(b) If there exists a right inverse $G_j^T$ of $H_j^T$ and
\begin{quote}
(i) If $j=1$, then $\text{\rm pd}(M)=0$

(ii) If $j=2$, then compute (\ref{equ6.3.6})

(iii) If $j\geq 3$, then compute (\ref{equ6.3.5})
\end{quote}
\end{quote}
\textit{Step} 2. $j:=j-1$
\end{description}}}
\end{center}

\begin{example}\label{projdim}
Let $A$ be the ring $\sigma(\mathbb{Q})\langle x,y\rangle$, where $yx=xy+x$. We will calculate the
projective dimension of the left module $M=\,_A\langle (1,1), (xy,0),(y^2,0), (0,x) \rangle$.
For this, we use the deglex order on $Mon(A)$, with $x\succ y$, and the TOP order over
$Mon(A^2)$, with $\textbf{\emph{e}}_2>\textbf{\emph{e}}_1$. Using Gröbner bases, it is possible to show that a free resolution
for $M$ is given by:
\begin{equation*}
\begin{diagram}
\node{0} \arrow{e} \node{A} \arrow{e,t}{F_2} \node{A^3} \arrow{e,t}{F_1} \node{A^4}
\arrow{e,t}{F_0} \node{M} \arrow{e} \node{0}
\end{diagram}
\end{equation*}
where,
\begin{center}
$F_0=\begin{bmatrix} 1&xy&y^2&0\\ 1&0&0&x \end{bmatrix}$,
$F_1=\begin{bmatrix} 0&-xy&xy^2+2xy\\ -y+1&1&-y-1\\ x&0&0\\0&y-1&1-y^2 \end{bmatrix}$,
$F_2=\begin{bmatrix} 0\\ y+1\\ 1 \end{bmatrix}$.
\end{center}
In order to apply the above algorithm, we start checking whether $F_2=\begin{bmatrix} 0 & y+1& 1
\end{bmatrix}^T$ has a right inverse. A straightforward calculation shows that a right inverse for
$F_2$ is $G_2=\begin{bmatrix} 0 & 1& -y \end{bmatrix}^T$, so we compute (\ref{equ6.3.6}):
\begin{equation}
\begin{diagram}
\node{0} \arrow{e} \node{A^3} \arrow{e,t}{H_1} \node{A^5} \arrow{e,t}{H_0} \node{M} \arrow{e} \node{0}
\end{diagram}
\end{equation}
where
\begin{center}
$H_1:=\begin{bmatrix}0&-xy&xy^2+2xy \\ -y+1&1&-y-1\\ x&0&0\\0&y-1&1-y^2\\0&1&-y\end{bmatrix}$ and
$H_0:=\begin{bmatrix} 1&xy&y^2&0\\ 1&0&0&x \end{bmatrix}$.
\end{center}
To verify if $H_1^T$ has a right inverse, we must calculate a Gröbner basis for the right module
generated by the columns of $H_1^T$. Since the ring $A$ considered is a bijective skew $PBW$
extension, we can use the right version of Buchberger's algorithm. For this, we consider the deglex
order on $Mon(A)$, with $x\succ y$, and the TOP order over $Mon(A^3)$, with
$\textbf{\emph{e}}_1<\textbf{\emph{e}}_2<\textbf{\emph{e}}_3$. Applying this algorithm, we obtain the following Gröbner basis for $\langle
H_1^T\rangle_A$, $G=\{(x,0,0), (1-y,0,-1), (0,-1,1), (0,-x,0), (0,y-1,0)\}$. Note that $\textbf{\emph{e}}_1$ is not
reducible by $G$, thus $\langle G\rangle_A\neq A^3$ and hence $H_1^T$ does not have a right inverse.
Therefore, pd$(M)=1$.
\end{example}

\begin{remark}
The above algorithms can be used for testing if a given module $M$ is projective: we can compute
its projective dimension, and hence, $M$ es projective if and only if ${\rm pd}(M)=0$.
\end{remark}

\section{Test for stably-freeness}
\noindent
Theorem \ref{3.2.19a} gives a procedure for testing stably-freeness for a module
$M\subseteq A^m$ given by an exact sequence
\begin{center}
$0\rightarrow A^s\xrightarrow{f_1}A^r\xrightarrow{f_0}M\rightarrow 0$,
\end{center}
where $A$ is a bijective skew $PBW$ extension.

\begin{center}
\fbox{\parbox[c]{12cm}{
\begin{center}
{\rm \textbf{Test for stably-freeness\\
Algorithm 1}}
\end{center}
\begin{description}
\item[]{\rm \textbf{INPUT}:} $M$ an $A$-module with exact sequence
\begin{equation*}
0\rightarrow A^s\xrightarrow{f_1}A^r\xrightarrow{f_0}M \rightarrow 0
\end{equation*}

\item[]{\rm \textbf{OUTPUT}:} TRUE if $M$ is stably free, FALSE otherwise

\item[]{\rm \textbf{INITIALIZATION}:} Compute the matrix $F_1$ of $f_1$

\item[]{\rm \textbf{IF}} $F_1^T$ has right inverse {\rm \textbf{THEN}}
\begin{quote}
\textbf{RETURN} TRUE
\end{quote}
\item[]{\rm \textbf{ELSE}}
\begin{quote}
\textbf{RETURN} FALSE
\end{quote}
\end{description}}}
\end{center}

\begin{example}
Let $A=\sigma(\mathbb{Q})\langle x,y\rangle$, with $yx=-xy$. We want to know if the left $A$-module
$M$ given by
\begin{center}
$M=\,_A\langle \textbf{\emph{e}}_3+\textbf{\emph{e}}_1,\textbf{\emph{e}}_4+\textbf{\emph{e}}_2,x\textbf{\emph{e}}_2+
x\textbf{\emph{e}}_1,y\textbf{\emph{e}}_1,y^2\textbf{\emph{e}}_4,x\textbf{\emph{e}}_4+y\textbf{\emph{e}}_3 \rangle$
\end{center}
is stably free or not. To answer this question, we start computing a finite presentation of $M$.
Considering the deglex order on $Mon(A)$ with $x\succ y$, the TOP order on $Mon(A^4)$ with
$\textbf{\emph{e}}_4>\textbf{\emph{e}}_3>\textbf{\emph{e}}_2>\textbf{\emph{e}}_1$, and using the methods established
in the previous sections, we have that a system of generators for $Syz(M)$ is given by
\[S=\{(0,-xy^2,y^2,-xy,x,0), (-y^2,xy,y,x+y,0,y), (y^3,0,0,-y^2,x,-y^2)\}.\]
Therefore, we get a following finite presentation for $M$:
\begin{equation}\label{PresentationM}
\begin{diagram}
\node{A^3} \arrow{e,t}{F_1} \node{A^6} \arrow{e,t}{F_0} \node{M} \arrow{e} \node{0}
\end{diagram}
\end{equation}
where,
\begin{center}
$F_1:=\begin{bmatrix}0 &-y^2&y^3 \\ -xy^2&xy&0\\ y^2&y&0\\-xy&x+y&-y^2\\x&0&x\\0&y&-y^2\end{bmatrix}$ and
$F_0:=\begin{bmatrix} 1&0&x&y&0&0\\0&1&x&0&0&0\\1&0&0&0&0&y\\0&1&0&0&y^2&x \end{bmatrix}$.
\end{center}
Applying  the method for computing the syzygy module, we have that $Syz_{A}(F_1)=0$, so the
presentation obtained in \ref{PresentationM} becomes
\begin{equation*}
\begin{diagram}
\node{0} \arrow{e} \node{A^3} \arrow{e,t}{F_1} \node{A^6} \arrow{e,t}{F_0} \node{M} \arrow{e} \node{0}
\end{diagram}
\end{equation*}

Finally, we must to test if $F_1^T$ has a right inverse. For this, we calculate a Gröbner basis for
the right module generated by the columns of $F_1^T$. Using the TOP order on $Mon(A^3)$, with
$\textbf{\emph{e}}_3>\textbf{\emph{e}}_2>\textbf{\emph{e}}_1$, a Gröbner basis for $\langle
F_1^T\rangle_A$ is given by $G=\{\textbf{\emph{f}}_i\}_{i=1}^7$, where $\textbf{\emph{f}}_i$ is the
$i$-th column of $F_1^T$ for $1\leq i\leq 6$, and
$\textbf{\emph{f}}_7=-\textbf{\emph{e}}_2xy^2+\textbf{\emph{e}}_1xy^2$. Note that,
for example, $\textbf{\emph{e}}_{1}\notin \langle G \rangle_{A}$ so that $A^6\neq \langle
G\rangle_A$. Thus, $F_1^T$ has not right inverse and hence $M$ is not stably free.
\end{example}

\begin{remark}
From Theorem \ref{3.2.19a}, if $M$ is a left $A$-module with exact sequence
$0\rightarrow A^s\xrightarrow{f_1}A^r\xrightarrow{f_0}M \rightarrow 0$, then
$M^T\cong Ext_{A}^1(M,A)$, where $M^T=S^s/Im(f_1^{T})$ and $f_1^T:S^r\rightarrow
S^s$ is the homomorphism of right free $S$-modules induced by the
matrix $F_1^T$. Thus, for testing if $M$ is stably free, we can use the results
of Section 5.6 in \cite{Gallego6} and computing a Gröbner basis for the right module
generated by columns of $F_1^{T}$. Using the right version of the division algorithm,
is possible to check whether $S^s=Im(F_{1}^{T})$. If this last equality holds, then $M^{T}=0$ and $M$
is stably free.
\end{remark}

Corollary \ref{6.3.4} gives another procedure for testing stably-freeness for a module $M\subseteq
A^m$ given by a finite free resolution (\ref{equ6.2.11}) with $S=A$: Indeed, if $m\geq 3$ and $f_m$
has not left inverse, then $M$ is non stably free; if $f_m$ has a left inverse we compute then the
new finite free resolution (\ref{equ6.3.5}) and we check if $h_{m-1}$ has a left inverse. We can
repeat this procedure until (\ref{equ6.3.6}); if $h_1$ has not left inverse, then $M$ is non stably
free. If $h_1$ has a left inverse, then $M$ is stably free.

\begin{example}\label{projdim2}
Let $A$ be the ring $\sigma(\mathbb{Q})\langle x,y\rangle$, where $yx=xy+x$ and consider the left
module $M=\,_A\langle (1,1), (xy,0),(y^2,0), (0,x) \rangle$ given in the Example \ref{projdim}.
As we saw there, a finite presentation for $M$ is given by:
\begin{equation}
\begin{diagram}
\node{0} \arrow{e} \node{A^3} \arrow{e,t}{H_1} \node{A^5} \arrow{e,t}{H_0} \node{M} \arrow{e} \node{0}
\end{diagram}
\end{equation}
where
\begin{center}
$H_1:=\begin{bmatrix}0&-xy&xy^2+2xy \\ -y+1&1&-y-1\\ x&0&0\\0&y-1&1-y^2\\0&1&-y\end{bmatrix}$ and
$H_0:=\begin{bmatrix} 1&xy&y^2&0\\ 1&0&0&x \end{bmatrix}$.
\end{center}
In such example, we showed that $H_1^T$ has not a right inverse, hence $M$ is not a stably free
module.
\end{example}

\section{Computing minimal presentations}

If $M\subseteq A^m$ is a stably free module given by the finite free resolution (\ref{equ6.2.11})
with $S=A$, then the Corollary \ref{6.3.4} gives a procedure for computing a minimal presentation of
$M$. In fact, if $m\geq 3$, then $f_m$ has a left inverse (if not, $\text{\rm pd}(M)=m$, but this
is impossible since $M$ is projective). Hence, we compute the new finite
presentation (\ref{equ6.3.5}) and we will repeat the procedure until we get a finite presentation
as in (\ref{equ6.3.6}), which is a minimal presentation of $M$.

\begin{example}
Let us consider again the ring $A=\sigma(\mathbb{Q})\langle x,y\rangle$, with $yx=-xy+1$.
Let $M$ be the left $A$-module given by presentation $A^2/Im(F_1)$, where
\begin{center}
$F_1=\begin{bmatrix}y^2&xy-1\\-xy&x^2\end{bmatrix}$.
\end{center}
Regarding the deglex order on $Mon(A)$, with $y\succ x$, and the TOP order over $Mon(A^2)$ with
$\textbf{\emph{e}}_2>\textbf{\emph{e}}_1$, we have that $Syz_A(F_1)$ is generated by $(x,y)$.
So, the following exact sequence is obtained:

\begin{equation*}
\begin{diagram}
\node{0}\arrow{e} \node{A} \arrow{e,t}{F_2} \node{A^2} \arrow{e,t}{F_1} \node{A^2} \arrow{e,t}{\pi} \node{M} \arrow{e} \node{0}
\end{diagram}
\end{equation*}

where $F_2:=\begin{bmatrix} x & y \end{bmatrix}^T$. Note that $F_2^{T}$ has a right inverse:
$G_2^T=\begin{bmatrix} y\\ x \end{bmatrix}$; thus, from Corollary \ref{6.3.4} we get
the following finite presentation for $M$:

\begin{equation}\label{MinimalPresentation}
\begin{diagram}
\node{0} \arrow{e} \node{A^2} \arrow{e,t}{h_1} \node{A^3} \arrow{e,t}{h_0} \node{M} \arrow{e} \node{0}
\end{diagram}
\end{equation}
with $H_1^T=\begin{bmatrix}F_1^T & G_2^T\end{bmatrix}$ and
$h_0=\begin{bmatrix} f_0 & 0\end{bmatrix}^T$. In the Example \ref{rightinverse2},
we showed that $H_1^T$ has a right inverse; moreover, one right
inverse for $H_1^T$ is
\begin{center}
$L_1^T=\begin{bmatrix}0&-1\\-1 &0 \\ x& y \end{bmatrix}$.
\end{center}
In consequence, (\ref{MinimalPresentation}) is a minimal presentation for $M$, and $M$ turns out to
be a stably free module.
\end{example}

\section{Computing free bases}\label{ComputingBases}
\noindent
In the \cite{Gallego3} and \cite{Quadrat}, it is presented a matrix constructive proof of a result due Stafford about stably free modules.
\begin{theorem}\label{7.3.6}
Let $S$ be a ring. Then any stably free $S$-module $M$ with ${\rm rank}(M)\geq \text{\rm sr}(S)$ is
free with dimension equals to ${\rm rank}(M)$.
\end{theorem}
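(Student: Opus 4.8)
The plan is to reduce the general statement to an induction on $\mathrm{rank}(M) - \mathrm{sr}(S)$, whose base case is the definition of the stable rank and whose inductive step is a ``one unimodular row can be shortened'' argument carried out via the matrix language set up in Lemma \ref{matricialhermite2}. First I would fix a minimal presentation $S^s \xrightarrow{f_1} S^r \xrightarrow{f_0} M \to 0$ with $M$ stably free, so that by Theorem \ref{3.2.19a} the map $f_1$ has a left inverse $g_1$ with $g_1 f_1 = i_{S^s}$; here $\mathrm{rank}(M) = r - s$. The claim is that $M$ is free of dimension $r-s$, and by Lemma \ref{matricialhermite2} this is equivalent to completing the $r\times s$ matrix $G_1^T$ to an invertible matrix of $GL_r(S)$ (equivalently, finding $U \in GL_r(S)$ with $U G_1^T = \begin{bmatrix} I_s \\ 0 \end{bmatrix}$). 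So the whole theorem becomes a statement about completing a left-invertible rectangular matrix to an invertible square one.

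Next I would run the induction on $s$ (the corank), keeping $r - s \geq \mathrm{sr}(S)$. When $s = 0$ there is nothing to prove. For the inductive step, observe that $G_1^T \in M_{r\times s}(S)$ left-invertible means its columns are a unimodular ``basis-extendable'' system; in particular the first column $v$ of $G_1^T$ is a unimodular vector in $S^r$, i.e. its entries generate $S$ as a left (or right) ideal. Since $r \geq r - s \geq \mathrm{sr}(S) \geq \mathrm{sr}(S)$, the defining property of the stable rank lets me add a left multiple of the last coordinate to the others so as to make the truncated vector $(v_1', \dots, v_{r-1}')$ unimodular in $S^{r-1}$; in matrix terms this produces an elementary (hence invertible) $E_1 \in GL_r(S)$ and then a further invertible $E_2$ so that $E_2 E_1 v = e_1$, the first canonical vector. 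Conjugating the whole matrix $G_1^T$ by these elementary matrices (and updating $f_1$, $g_1$ accordingly, which keeps the presentation minimal and the left-inverse relation intact) replaces $G_1^T$ by a matrix whose first column is $e_1$; a block elimination on the first row then exhibits it, up to multiplication by $GL_r(S)$, as $\begin{bmatrix} 1 & 0 \\ 0 & G_1'^T \end{bmatrix}$ with $G_1'^T \in M_{(r-1)\times(s-1)}(S)$ still left-invertible and with $(r-1)-(s-1) = r-s \geq \mathrm{sr}(S)$. The induction hypothesis completes $G_1'^T$ to an element of $GL_{r-1}(S)$, and reassembling the block gives the desired $U \in GL_r(S)$; by Lemma \ref{matricialhermite2} the last $r-s$ columns of $U^T$ then form a free basis of $M$.

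The main obstacle is the inductive step's use of the stable-rank condition: one must check carefully that after each coordinate reduction the \emph{remaining} matrix is still left-invertible (not merely that one column got shortened), that the accumulated transformations really lie in $GL_r(S)$ rather than in some larger monoid of invertible-from-one-side matrices, and — most delicately — that the rank bound $r - s \geq \mathrm{sr}(S)$ is preserved at every stage so that the hypothesis of the stable rank definition keeps applying (it is exactly the equality $(r-1)-(s-1) = r-s$ that makes this work, but it must be tracked honestly through the block decomposition). Since the excerpt states this theorem as a black box with the constructive proof attributed to \cite{Gallego3} and \cite{Quadrat}, I would for a self-contained treatment cite those references for the verification of these bookkeeping points and present the matrix reduction above as the conceptual skeleton; the algorithmic content for skew $PBW$ extensions then comes from feeding the left-inverse computation of Corollary \ref{LeftInverseAlg} and the completion procedure of Lemma \ref{matricialhermite2} into this induction.
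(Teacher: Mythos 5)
Your plan is correct and is essentially the proof the paper relies on: the paper itself only cites Theorem 1 of \cite{Gallego3}, whose argument is exactly your reduction via Lemma \ref{matricialhermite2} to completing $G_1^T$, followed by the column-shortening induction on $s$ using the stable-rank condition — the mechanism reproduced here as Proposition \ref{6.2.29} and implemented in the ``Algorithm for computing bases''. The only nitpick is that the inequality you need to shorten a unimodular column of length $r$ is $r\geq \mathrm{sr}(S)+1$, which holds because $s\geq 1$ in the inductive step, not merely $r\geq r-s\geq \mathrm{sr}(S)$ as written.
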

\begin{proof}
See Theorem 1 in \cite{Gallego3}.
\end{proof}
In the proof of such affirmation,  the following fact is necessary.
\begin{proposition}\label{6.2.29}
Let $S$ be a ring and $\textbf{v}:=\begin{bmatrix}v_1 & \dots & v_r\end{bmatrix}^T$ an unimodular
stable column vector over $S$, then there exists $U\in E_r(S)$ such that
$U\textbf{v}=\textbf{e}_1$.
\end{proposition}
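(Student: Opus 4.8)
The plan is to reduce the given unimodular stable vector to the first canonical vector by a sequence of elementary transformations, exploiting the defining property of a stable unimodular vector: one can modify the later coordinates by an element of the ideal generated by the earlier ones so as to shorten the relevant unimodular tail. First I would recall that $\textbf{v}$ being unimodular means there exist $a_1,\dots,a_r\in S$ with $\sum_{i=1}^r a_iv_i=1$, and that $\textbf{v}$ being \emph{stable} (in the sense used in \cite{Gallego3}) means precisely that the row of coordinates $(v_1,\dots,v_{r})$ can be shortened: there exist $b_1,\dots,b_{r-1}\in S$ such that the truncated vector $(v_1+b_1v_r,\dots,v_{r-1}+b_{r-1}v_r)$ is again unimodular over $S$ (this is the unimodular–row/stable–rank condition underlying $\text{sr}(S)$). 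The elementary matrix that adds $b_j v_r$ to the $j$-th coordinate for $1\le j\le r-1$ lies in $E_r(S)$, so applying it we may assume, after renaming, that already $(v_1,\dots,v_{r-1})$ is unimodular, say $\sum_{i=1}^{r-1}c_iv_i=1$.

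Next I would kill the last coordinate. Using $\sum_{i=1}^{r-1}c_iv_i=1$, the transvections adding $(1-v_r)c_i v_i$-type corrections — concretely, first add to the $r$-th coordinate the combination $\sum_{i=1}^{r-1}(1-v_r)c_i \cdot(\text{row }i)$ to turn $v_r$ into $v_r+(1-v_r)=1$, then subtract $v_i$ times the $r$-th row from the $i$-th row for each $i\le r-1$ — are all in $E_r(S)$ and bring $\textbf{v}$ to $\textbf{e}_r=\begin{bmatrix}0&\cdots&0&1\end{bmatrix}^T$. Finally, a standard permutation-type element of $E_r(S)$ (a product of three transvections realizing the swap of coordinates $1$ and $r$ up to sign, together with a further transvection to fix the sign) carries $\textbf{e}_r$ to $\textbf{e}_1$. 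Composing all these elementary matrices gives the desired $U\in E_r(S)$ with $U\textbf{v}=\textbf{e}_1$.

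The main obstacle is the first step: getting the correct bookkeeping for the stable-range reduction, i.e. making precise that the hypothesis ``$\textbf{v}$ is a unimodular \emph{stable} column'' is exactly what licenses replacing $(v_1,\dots,v_r)$ by a shorter unimodular tuple via an elementary transformation, and checking that this transformation is honestly in $E_r(S)$ (not merely $GL_r(S)$). Once that reduction is in hand, the passage from a unimodular $(r-1)$-tuple to $\textbf{e}_r$ and then to $\textbf{e}_1$ is routine elementary-matrix manipulation. I would also take care that every step uses only transvections, so that the final product genuinely lands in the elementary subgroup $E_r(S)$ as claimed.
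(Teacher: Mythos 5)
Your proposal is correct and follows essentially the same route as the paper: use the stability hypothesis to pass, via a matrix in $E_r(S)$, to a vector whose first $r-1$ entries form a unimodular column, and then finish with transvections exploiting a relation $\sum_i c_i v_i'=1$. The only difference is minor bookkeeping at the end --- you produce a $1$ in the last coordinate, reduce to $\textbf{e}_r$, and move it to position $1$ by a product of transvections, whereas the paper's matrices $E_2,E_3,E_4$ manufacture the $1$ directly in the first coordinate and then clear the rest, avoiding the rotation step.
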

\begin{proof}
By completeness, we include the proof of this fact (see Proposition 38 in \cite{Quadrat}). There exist elements $a_1,\dots, a_{r-1}\in S$ such that
\begin{equation}\label{newunimodular}
\textbf{\emph{v}}':=(v_1',\dots, v_{r-1}')^T\in Um_c(r-1,S), \text{ with }
v_i':=v_i+a_iv_r, 1\leq i\leq r-1.
\end{equation}
Consider the matrix
\begin{equation}\label{matrixE1}
E_1:=\begin{bmatrix} 1 & 0 & 0 & \cdots & 0 & a_1\\
0 & 1 & 0 & \cdots & 0 & a_2\\
\vdots & \vdots & \vdots & \vdots & \vdots & \vdots \\
0 & 0 & 0 & \cdots & 1 & a_{r-1}\\
0 & 0 & 0 & \cdots & 0 & 1\\
\end{bmatrix}\in E_r(S);
\end{equation}
then $E_1v=(v_1',\dots, v_{r-1}',v_r)^T$. Since that
$\textbf{\emph{v}}':=(v_1',\dots, v_{r-1}')\in Um_c(r-1,S)$, there
exists $b_1,\dots,b_{r-1}\in S$ such that
$\sum_{i=1}^{r-1}b_iv_i'=1$, and hence,
$\sum_{i=1}^{r-1}(v_1'-1-v_r)b_iv_i'=v_1'-1-v_r$. Let
$v_i'':=(v_1'-1-v_r)b_i$, $1\leq i\leq r-1$ and
\begin{equation}\label{matrixE2}
E_2:=\begin{bmatrix} 1 & 0 & 0 & \cdots & 0 & 0\\
0 & 1 & 0 & \cdots & 0 & 0\\
\vdots & \vdots & \vdots & \vdots & \vdots & \vdots \\
0 & 0 & 0 & \cdots & 1 & 0\\
v_1'' & v_2'' & v_3'' & \cdots & v_{r-1}'' & 1\\
\end{bmatrix}\in E_r(S);
\end{equation}
then $E_2E_1v=(v_1',\dots,v_{r-1}', v_1'-1)^T$. Moreover, let
\begin{equation}\label{matrixE3}
E_3:=\begin{bmatrix} 1 & 0 & 0 & \cdots & 0 & -1\\
0 & 1 & 0 & \cdots & 0 & 0\\
\vdots & \vdots & \vdots & \vdots & \vdots & \vdots \\
0 & 0 & 0 & \cdots & 1 &0\\
0 & 0 & 0 & \cdots & 0 & 1\\
\end{bmatrix}\in E_r(S),
\end{equation}
then $E_3E_2E_1v=(1,v_2',\dots,v_{r-1}',v_1'-1)^T$. Finally, let
\begin{equation}\label{matrixE4}
E_4:=\begin{bmatrix} 1 & 0 & 0 & \cdots & 0 & 0\\
-v_2' & 1 & 0 & \cdots & 0 & 0\\
\vdots & \vdots & \vdots & \vdots & \vdots & \vdots \\
-v_{r-1}' & 0 & 0 & \cdots & 1 &0\\
-v_1'+1 & 0 & 0 & \cdots & 0 & 1\\
\end{bmatrix}\in E_r(S),
\end{equation}
then $E_4E_3E_2E_1v=\textbf{\emph{e}}_1$ and $U:=E_1E_2E_3E_4\in
E_r(S)$.
\end{proof}

For an effective calculation of a basis of $M$, we start establishing an algorithm  for to calculate the elementary
matrix $U$ in the Proposition \ref{6.2.29}:

\begin{center}
\fbox{\parbox[c]{12cm}{
\begin{center}
{\rm \textbf{Algorithm for computing $U$ in Proposition \ref{6.2.29} }}
\end{center}
\begin{description}
\item[]{\rm \textbf{INPUT}:} An unimodular stable column vector $\textbf{\emph{v}}=\begin{bmatrix} v_1 &\cdots& v_r \end{bmatrix}^T$ over $S$.

\item[]{\rm \textbf{OUTPUT}:} An elementary matrix $U\in M_r(S)$ such that $U\textbf{\emph{v}}=\textbf{\emph{e}}_1$.

\item[]{\rm \textbf{DO}:}
\begin{enumerate}
\item Compute $a_1,\ldots,a_{r-1}\in S$ such that (\ref{newunimodular}) holds.
\item Compute the matrix  $E_1$ given in (\ref{matrixE1}).
\item Calculate the elements $b_1,\ldots,b_{r-1}\in S$ with the property that
 $\sum_{i=1}^{r-1}b_iv_i'=1$, with $v_i'=v_i+a_iv_r$ for $1\leq i\leq r-1$.
\item Define $v_i'':=(v_i'-1-v_r)b_i$ for $1\leq i\leq r-1$, and compute the matrices $E_2$,
$E_3$ and $E_4$ given in (\ref{matrixE2})-(\ref{matrixE4}).
\end{enumerate}

\item[]{\rm \textbf{RETURN}:}  $U:=E_4E_3E_2E_1$.

\end{description}}}
\end{center}

We will illustrate below this algorithm.

\begin{example}\label{ExampleStable}
For this example, we consider the \textit{Quantum Weyl Algebra} $A_2(J_{a,b})$. Recall this $\Bbbk$-algebra is generated by the
variables $x_1,x_2,\partial_1,\partial_2$, with the relations (depending upon parameters
$a,b\in \Bbbk$):
\begin{align*}
x_1x_2=&x_2x_1+ax_1^2\\
\partial_2\partial_1=&\partial_1\partial_2+b\partial_2^2\\
\partial_1x_1=&1+x_1\partial_1+ax_1\partial_2\\
\partial_1x_2=&-ax_1\partial_1-abx_1\partial_2+x_2\partial_1+bx_2\partial_2\\
\partial_2x_1=&x_1\partial_2\\
\partial_2x_2=&1-bx_1\partial_2+x_2\partial_2.
\end{align*}
When $a=b=0$, we have that $A_2(J_{0,0})\cong A_2(\Bbbk)$ for any field $\Bbbk$ (see \cite{Fujita} for more properties).
It is not difficult to show that $A_2(J_{a,b})\cong \sigma(\Bbbk[x_1,\partial_2])\langle x_2,\partial_1 \rangle$.
Take $\Bbbk=\mathbb{Q}$, $a=0$ and $b=-1$. Thus, the relations in this
ring are given by:
\begin{align*}
x_1x_2=&x_2x_1\\
\partial_2\partial_1=&\partial_1\partial_2-\partial_2^2\\
\partial_1x_1=&1+x_1\partial_1\\
\partial_1x_2=&x_2\partial_1-x_2\partial_2\\
\partial_2x_1=&x_1\partial_2\\
\partial_2x_2=&1+x_1\partial_2+x_2\partial_2.
\end{align*}
$E_4(A_2(J_{0,-1}))$ it will denote the group generated by all elementary matrices of size $4\times 4
$ over $A_2(J_{0,-1})$. Let $\textbf{\emph{v}}=\begin{bmatrix}\partial_2+x_1
&\partial_2+\partial_1&x_2&\partial_1\end{bmatrix}^{T}$, then
$\textbf{\emph{u}}=\begin{bmatrix}\partial_1 &-\partial_2&0&-x_1\end{bmatrix}$ is such that
$\textbf{\emph{u}}\textbf{\emph{v}}=1$, whereby $\textbf{\emph{v}}\in Um_c(4,A_2(J_{0,-1}))$.
Moreover, the column vector $\textbf{\emph{v}}'=\begin{bmatrix}\partial_2+x_1
&\partial_2&x_2\end{bmatrix}^{T}$ has a left inverse $\textbf{\emph{u}}'=\begin{bmatrix}0
&x_2-x_1&\partial_2\end{bmatrix}$, so $\textbf{\emph{v}}$ is a stable unimodular column. In this
case, $a_1=0$, $a_2=-1$, $a_3=0$ and the matrix $E_1$ is given by
\begin{center}
$E_1=\begin{bmatrix}1&0&0&0\\ 0&1&0&-1\\ 0&0&1&0\\ 0&0&0&1\end{bmatrix}$.
\end{center}
With this elementary matrix we get
$E_1\textbf{\emph{v}}=\begin{bmatrix}\partial_2+x_1&\partial_2&x_2&\partial_1\end{bmatrix}^T$.
If we define $v_1'':=0$, $v_2'':=(\partial_2+x_1-1-\partial_1)(x_2-x_1)$, $v_3''=(\partial_2+x_1-1-\partial_1)\partial_2$
and
\begin{center}
$E_2=\begin{bmatrix}1&0&0&0\\ 0&1&0&-1\\ 0&0&1&0\\ 0&v_2''&v_3''&1\end{bmatrix}$,
\end{center}
we obtain $E_2E_1\textbf{\emph{v}}=\begin{bmatrix}\partial_2+x_1&\partial_2&x_2&\partial_2+x_1-1 \end{bmatrix}^T$.
Finally, if we define
\begin{center}
$E_3=\begin{bmatrix}1&0&0&-1\\0&1&0&0\\0&0&1&0\\0&0&0&1\end{bmatrix}\in E_4(A_2(J_{0,-1}))$,
$E_4=\begin{bmatrix}1&0&0&0\\-\partial_2&1&0&0\\-x_2&0&1&0\\-\partial_2-x_1+1&0&0&1\end{bmatrix}\in E_4(A_2(J_{0,-1}))$
\end{center}
and $U:=E_4E_3E_2E_1\in E_4(A_2(J_{0,-1}))$, then we have $U\textbf{\emph{v}}=\textbf{\emph{e}}_1$.
\end{example}

The proof of Theorem \ref{7.3.6} allows us to establish an algorithm to compute
a basis for $M$, when $M$ is a stably free module given by a minimal presentation
\begin{equation}\label{minimalstablybasis}
0\rightarrow S^s\xrightarrow{f_1}S^r\xrightarrow{f_0}M\rightarrow 0,
\end{equation}
with $g_1:S^r\to S^s$ such that $g_1\circ f_1=i_{S^s}$, and ${\rm rank}(M)=r-s\geq {\rm sr}(S)$.

\begin{center}
\fbox{\parbox[c]{12cm}{
\begin{center}
{\rm \textbf{Algorithm for computing bases}}
\end{center}
\begin{description}
\item[]{\rm \textbf{INPUT}:} $F_1=m(f_1)$ such that $F_1^T\in M_{s\times r}$ has a right inverse
$G_1^{T}\in M_{r\times s}$, and satisfies $r-s\geq {\rm sr}(S)$.

\item[]{\rm \textbf{OUTPUT}:} A matrix $U\in M_{r}(S)$ such that $UG_1^T=\begin{bmatrix}I_s & 0\end{bmatrix}^T$;
by Lemma \ref{matricialhermite2} the set $\{(U^{T})^{(s+1)},\ldots,(U^{T})^{(r)}\}$ is a basis for
$M$, where $(U^{T})^{(j)}$ denotes the $j$-th column of $U^T$ for $s+1\leq j\leq r$.

\item[]{\rm \textbf{INITIALIZATION}:} $i=1$, $V=I_r$.

{\rm \textbf{WHILE}} $i<r$ \textbf{DO}:

\begin{enumerate}
\item Denote by $\textbf{\emph{v}}_i\in S^{r-i+1}$ the column vector given by taking
the last $r-i+1$ entries of the $i$-th column of $VG_1^T$.
\item Apply the previous algorithm to compute $L_i\in E_{r-i+1}(S)$ such that
$L_i\textbf{\emph{v}}_i=\textbf{\emph{e}}_1$.
\item Define the matrix $U_i:=\begin{bmatrix}I_{i-1} & 0\\ 0 & L_i\end{bmatrix}\in E_{r}(S)$ for $i>1$, and $U_1:=L_1$.
\item $i=i+1$
\end{enumerate}

\quad {\rm \textbf{RETURN}}  $U:=PU_sV$, where $P$ is an adequate elementary matrix.

\end{description}}}
\end{center}

\begin{example}
Let $A$ be the \textit{Quantum Weyl Algebra} $A_2(J_{a,b})$ considered in Example
\ref{ExampleStable}, with $\Bbbk=\mathbb{Q}$, $a=0$ and $b=-1$. In order to illustrate the previous
algorithm, take $M=A^{6}/Im(F_1)$, where
\begin{center}
$F_1=\begin{bmatrix}0&\partial_1\\x_2&\partial_2\\0&-x_1\\ \partial_1&0\\ x_1 &1\\ \partial_2&-1 \end{bmatrix}$.
\end{center}
Using the algorithm described in Corollary \ref{RightInverseAlg}, the deglex order over $Mon(A)$,
with $x_2>\partial_1$, and the TOPREV order on $Mon(A^6)$, with
$\textbf{\emph{e}}_1>\textbf{\emph{e}}_2$, it is possible to show that $F_1^T$ has a right inverse
given by:
\begin{center}
$G_1^T=\begin{bmatrix}x_1\partial_1&x_1\\ 0&0\\ \partial_1^2& \partial_1\\x_1 & 0\\-\partial_1 &0\\0&0 \end{bmatrix}$.
\end{center}
Hence, we have the following minimal presentation for $M$:
\begin{equation}
0\rightarrow A^2\xrightarrow{F_1}A^6\xrightarrow{\pi}M\rightarrow 0,
\end{equation}
where $\pi$ is the canonical projection. Thus, $M$ is a stably free $A$-module with $\rm{rank}(M)=4$. Since
lKdim$(A)=3$ (see \cite{Fujita}, Theorem 2.2), then ${\rm sr}(A)\leq 4$ and by the Theorem \ref{7.3.6}, $M$
is free with dimension equals to ${\rm rank}(M)$. We will use the previous algorithm for computing a basis of $M$.\\
$\centerdot$ \textit{Step 1.} Let $V=I_6$ and $\textbf{\emph{v}}_1$ the first column of $VG_1^T$, i.e.,
\begin{center}
$\textbf{\emph{v}}_1=\begin{bmatrix}x_1\partial_1 &0&\partial_1^2&x_1&-\partial_1 &0 \end{bmatrix}^T$,
\end{center}
then $\textbf{\emph{v}}_1\in Um_{c}(6,A)$ and
$\textbf{\emph{u}}_1=\begin{bmatrix}0&x_2&0&\partial_1&x_1&-\partial_1\end{bmatrix}$ is such that
$\textbf{\emph{u}}_1\textbf{\emph{v}}_1=1$. Note that
$\textbf{\emph{v}}_1'=\begin{bmatrix}x_1\partial_1 &0&\partial_1^2&x_1&-\partial_1\end{bmatrix}^T$
is trivially unimodular. Applying to $\textbf{\emph{v}}_1$ the first algorithm of the current
section, we have that $E_1=I_6$,
\begin{center}
$E_2=\begin{bmatrix}1&0&0&0&0&0\\0&1&0&0&0&0\\0&0&1&0&0&0\\0&0&0&1&0&0\\0&0&0&0&1&0\\
0&(x_1\partial_1-1)x_2&0&(x_1\partial_1-1)\partial_1&(x_1\partial_1-1)x_1&1\end{bmatrix}$, \\
$E_3=\begin{bmatrix}1&0&0&0&0&-1\\0&1&0&0&0&0\\0&0&1&0&0&0\\0&0&0&1&0&0\\0&0&0&0&1&0\\
0&0&0&0&0&1\end{bmatrix}$ and,
$E_4=\begin{bmatrix}1&0&0&0&0&0\\0&1&0&0&0&0\\-\partial_1^2&0&1&0&0&0\\-x_1&0&0&1&0&0\\ \partial_1&0&0&0&1&0\\
-x_1\partial_1+1&0&0&0&0&1\end{bmatrix}$.
\end{center}
We can check that {\footnotesize
\begin{center}
$U_1:=E_4E_3E_2E_1=\begin{bmatrix}1&-(x_1\partial_1-1)x_2&0&-(x_1\partial_1-1)\partial_1&-(x_1\partial_1-1)x_1&-1\\
0&1&0&0&0&0\\
-\partial_1^2&\partial_1^2(x_1\partial_1-1)x_2&1&\partial_1^2(x_1\partial_1-1)\partial_1&
\partial_1^2(x_1\partial_1-1)x_1&\partial_1^2\\
-x_1&x_1(x_1\partial_1-1)x_2&0&x_1(x_1\partial_1-1)\partial_1+1&x_1(x_1\partial_1-1)x_1&x_1\\
\partial_1&-\partial_1(x_1\partial_1-1)x_2&0&-\partial_1(x_1\partial_1-1)\partial_1&-\partial_1(x_1\partial_1-1)x_1+1&-\partial_1\\
-x_1\partial_1+1&x_1\partial_1(x_1\partial_1-1)x_2&0&x_1\partial_1(x_1\partial_1-1)\partial_1&x_1\partial_1(x_1\partial_1-1)x_1
&x_1\partial_1\end{bmatrix}\in E_{6}(A)$
\end{center}
}
and
\begin{center}
$U_1G_1^{T}=\begin{bmatrix}1&x_1\\0&0\\0&-x_1\partial_1^2-\partial_1\\0&-x_1^2\\0&x_1\partial_1+1\\
0&-x_1^2\partial_1\end{bmatrix}$.
\end{center}

$\centerdot$ \textit{Step 2.} Make $V:=U_1$ and let $\textbf{\emph{v}}_2$ be the column vector
given by taking the last five entries of the $2$-th column of $VG_1^T$; i.e.,
$\textbf{\emph{v}}_2=\begin{bmatrix}0&-x_1\partial_1^2-\partial_1&-x_1^2&x_1\partial_1+1&-x_1^2\partial_1\end{bmatrix}^T$.
Note that $\textbf{\emph{u}}_2=\begin{bmatrix}0&-x_1&\partial_1^2&3&0\end{bmatrix}$ satisfies
$\textbf{\emph{u}}_2\textbf{\emph{v}}_2=1$, thus $v_2\in Um_{c}(5,A)$. Moreover,
$\textbf{\emph{v}}_2'=\linebreak\begin{bmatrix}0&-x_1\partial_1^2-\partial_1&-x_1^2&  x_1\partial_1+1\end{bmatrix}$
is unimodular with $\textbf{\emph{u}}_2'=\begin{bmatrix}0&-x_1&\partial_1^2&3\end{bmatrix}$ such
that $\textbf{\emph{u}}_2'\textbf{\emph{v}}_2'=1$, and hence $\textbf{\emph{v}}_2$ is stable. Using
the algorithm at the beginning of this section, we have that $E_1=I_5$,
\begin{center}
$E_2=\begin{bmatrix}1&0&0&0&0\\0&1&0&0&0\\0&0&1&0&0\\0&0&0&1&0\\
0&-(-1+x_1^2\partial_1)x_1&(-1+x_1^2\partial_1)\partial_1^2&3(-1+x_1^2\partial_1)&1\end{bmatrix}$,
$E_3=\begin{bmatrix}1&0&0&0&-1\\0&1&0&0&0\\0&0&1&0&0\\0&0&0&1&0\\0&0&0&0&1\end{bmatrix}$ and,
$E_4=\begin{bmatrix}1&0&0&0&0\\x_1\partial_1^2+\partial_1&1&0&0&0\\x_1^2&0&1&0&0\\-x_1\partial_1-1&0&0&1&0\\ 1&0&0&0&1\end{bmatrix}$.
\end{center}
Making the respective calculations, we have that {\tiny
\begin{center}
$L_2:=E_4E_3E_2E_1=\begin{bmatrix}1&(-1+x_1^2\partial_1)x_1&-(-1+x_1^2\partial_1)\partial_1^2&-3(-1+x_1^2\partial_1)&-1\\
x_1\partial_1^2+\partial_1&1+(x_1\partial_1^2+\partial_1)(-1+x_1^2\partial_1)x_1&
-(x_1\partial_1^2+\partial_1)(-1+x_1^2\partial_1)\partial_1^2&-3(x_1\partial_1^2+\partial_1)(-1+x_1^2\partial_1)&
-(x_1\partial_1^2+\partial_1)\\
x_1^2&x_1^2(-1+x_1^2\partial_1)x_1&1-x_1^2(-1+x_1^2\partial_1)\partial_1^2&-3x_1^2(-1+x_1^2\partial_1)&-x_1^2\\
-(x_1\partial_1+1)&-(x_1\partial_1+1)(-1+x_1^2\partial_1)x_1&(x_1\partial_1+1)(-1+x_1^2\partial_1)\partial_1^2&
1+3(x_1\partial_1+1)(-1+x_1^2\partial_1)&x_1\partial_1+1\\1&0&0&0&0\end{bmatrix}$.
\end{center}
} and $L_2\textbf{\emph{v}}_2=\textbf{\emph{e}}_1\in A^{5}$. Define
$U_2:=\begin{bmatrix}1&0\\0&L_2\end{bmatrix}$; then
\begin{center}
$U_2U_1G_1^T=\begin{bmatrix}1&x_1\\0&1\\0&0\\0&0\\0&0\\ 0&0\end{bmatrix}$.
\end{center}
Finally, if
\begin{center}
$P_1:=\begin{bmatrix}1&-x_1&0&0&0&0\\0&1&0&0&0&0\\0&0&1&0&0&0\\0&0&0&1&0&0\\0&0&0&0&1&0\\
0&0&0&0&0&1\end{bmatrix}$, then $UG_1^T=\begin{bmatrix}1&0\\0&1\\0&0\\0&0\\0&0\\ 0&0\end{bmatrix}$,
\end{center}
where $U:=P_1U_2U_1$. Thus, a basis for $M$ is given by
$\{\pi(U_{(3)}),\pi(U_{(4)}),\pi(U_{(5)}),\pi(U_{(6)})\}$,
with $U_{(i)}^T$ denoting the transpose of $i$-th row of the matrix $U$, for $i=3,4,5,6$; i.e.,
{\small
\begin{align*}
U_{(3)}^T=&\begin{bmatrix}-x_1^3\partial_1^2+x_1\partial_1^3-4x_1^2\partial_1-2x_1\\
(x_1\partial_1^2+\partial_1)(1-x_1\partial_1^2x_2+x_1^3\partial_1^3x_2+\partial_1x_2)\\
1+(x_1\partial_1^2+\partial_1)(-1+x_1^2\partial_1)x_1\\
(x_1\partial_1^2+\partial_1)(x_1^3\partial_1^4-x_1\partial_1^3+2\partial_1^2-x_1\partial_1^3)\\
(x_1\partial_1^2+\partial_1)(\partial_1x_1-x_1\partial_1^2x_1+x_1^3\partial_1^3x_1-3x_1^2\partial_1+3)\\
(x_1\partial_1^2+\partial_1)(-\partial_1+x_1^2\partial_1^2-x_1\partial_1)+\partial_1^2\end{bmatrix},\\
U_{(4)}^T=&\begin{bmatrix}x_1^2\partial_1-x_1^4\partial_1^2+x_1^3\partial_1-x_1^2-x_1\\
x_1^2+(-x_1^2\partial_1+x_1^4\partial_1^2-x_1^3\partial_1+x_1)(x_1\partial_1-1)x_2\\
-x_1^3+x_1^5\partial_1+x_1^4\\
-x_1^3\partial_1^3+x_1^5\partial_1^4+2x_1^2\partial_1^2-x_1\partial_1-x_1^4\partial_1^3+1\\
-x_1^4\partial_1^2-x_1^3\partial_1+x_1^6\partial_1^3+3x_1^5\partial_1^2-3x_1^4\partial_1+3x_1^2\\
-x_1^2\partial_1+x_1^4\partial_1^2-x_1^3\partial_1+x_1\end{bmatrix},\\
U_{(5)}^T=&\begin{bmatrix}-x_1\partial_1^2+x_1^3\partial_1^3+2x_1^2\partial_1^2-x_1\partial_1+1\\
x_1\partial_1(-1+x_1\partial_1^2x_2-x_1^3\partial_1^3x_2)-x_1^3\partial_1^3x_2-1\\
-(x_1\partial_1+1)(-1+x_1^2\partial_1)x_1\\
(x_1\partial_1+1)(x_1\partial_1^3-x_1^3\partial_1^4+x_1^2\partial_1^3-\partial_1^2)\\
(x_1\partial_1+1)(x_1\partial_1^2x_1-x_1^3\partial_1^3+3x_1^2\partial_1-3)-x_1^2\partial_1^2+2x_1\partial_1+1\\
-(x_1\partial_1+1)(-\partial_1+x_1^2\partial_1^2-x_1\partial_1)-\partial_1\end{bmatrix},\,
U_{(6)}^T=\begin{bmatrix}0\\ 1\\ 0\\ 0 \\0\\ 0\end{bmatrix}
\end{align*}}

\end{example}


\end{document}